\newcommand{\R}{\mathbb{R}}
\newcommand{\Z}{\mathbb{Z}}
\newcommand{\N}{\mathbb{N}}
\newcommand{\pS}{\mathbb{S}_p}
\newcommand{\ppS}{\mathbb{S}_{p}^{\geqs 0}}
\newcommand{\pppS}{\mathbb{S}_{p}^{> 0}}
\newcommand{\pnorm}[1]{\left\lVert{#1}\right\rVert_p}
\newcommand{\tetra}[2]{K_{#1}^{(#2)}}
\DeclareMathOperator{\ex}{ex}
\DeclareMathOperator{\spex}{spex}
\newcommand{\cH}{\mathcal{H}}
\newcommand{\cF}{\mathcal{F}}
\newcommand{\cP}{\mathcal{F}}
\newcommand{\fF}{\mathfrak{F}}
\newcommand{\1}{\mathds{1}}
\newcommand{\bx}{\mathbf{x}}
\newcommand{\bt}{\mathbf{t}}
\newcommand{\geqs}{\geqslant}
\newcommand{\leqs}{\leqslant}
\newtheorem{theorem}{Theorem}[section]
\newtheorem{lemma}[theorem]{Lemma}
\newtheorem{corollary}[theorem]{Corollary}
\newtheorem{conjecture}[theorem]{Conjecture}
\newtheorem{proposition}[theorem]{Proposition}
\newtheorem{question}[theorem]{Question}
\theoremstyle{definition}
\newtheorem{definition}{Definition}[section]
\theoremstyle{remark}
\newtheorem{remark}{Remark}[section]
\begin{document}
 
\title[Principal eigenvectors in hypergraph Tur\'an problems]{Principal eigenvectors and principal ratios in hypergraph Tur\'an problems}

\author{Joshua Cooper}
\address{Department of Mathematics, University of South Carolina}
\email{\href{mailto:cooper@math.sc.edu}{cooper@math.sc.edu}}

\author{Dheer Noal Desai}
\address{Department of Mathematical Sciences, The University of Memphis}
\email{\href{mailto:dndesai@memphis.edu}{dndesai@memphis.edu}}

\author{Anurag Sahay}
\address{Department of Mathematics, Purdue University}
\email{\href{mailto:anuragsahay@purdue.edu}{anuragsahay@purdue.edu}}

\dedicatory{Dedicated to the memory of Vladimir Nikiforov.}

\begin{abstract} For a general class of hypergraph Tur\'an problems with uniformity $r$, we investigate the principal eigenvector for the $p$-spectral radius (in the sense of Keevash--Lenz--Mubayi \cite{klmspex} and Nikiforov \cite{nikiforovanalyticpublished}) for the extremal graphs, showing in a strong sense that these eigenvectors have close to equal weight on each vertex (equivalently, showing that the principal ratio is close to $1$). We investigate the sharpness of our result; it is likely sharp for the Tur\'an tetrahedron problem. 

In the course of this latter discussion, we establish a lower bound on the $p$-spectral radius of an arbitrary $r$-graph in terms of the degrees of the graph. This builds on earlier work of Cardoso--Trevisan \cite{cardosotrevisan}, Li--Zhou--Bu \cite{lizhoubu}, Cioab\u{a}--Gregory \cite{cioabagregory}, and Zhang \cite{zhangprincipalratio}.

The case $1 < p < r$ of our results leads to some subtleties connected to Nikiforov's notion of $k$-tightness, arising from the Perron-Frobenius theory for the $p$-spectral radius. We raise a conjecture about these issues, and provide some preliminary evidence for our conjecture.

\end{abstract}

\maketitle

\section{Introduction}

Throughout this article, we work with simple, $r$-uniform hypergraphs and we sometimes restrict to the case $r = 3$. If $V = V(G)$ and $E = E(G)$ are the vertex and edge sets respectively of an $r$-graph $G$, then, $E \subseteq \binom{V}{r}$ and, for $\bx \in \R^V$ and $e \subseteq V$ (which will often but not always be an edge), we write
\[ x^e = \prod_{v \in e} x_v, \]
and define the Lagrangian polynomial $L_G(\bx)$ by
\[ L_G(\bx) = r! \sum_{e \in E} x^e. \]

A central problem in extremal combinatorics is the Tur\'an  $(r,t)$-problem \cite{turanconj}, $t > r$ which asks for the maximum number of edges in an $r$-graph $G$ avoiding a $\tetra{t}{r}$ (i.e., the complete $r$-graph on $t$ vertices). While Tur\'an himself solved the graph case, $t>r = 2$ \cite{turanthm}, which by now has several different proofs, the problem is open for any $t>r>2$.

More generally, this problem fits into the framework of forbidden subgraph problems in extremal hypergraph theory. Let $\cH$ be a set of $r$-uniform hypergraphs. We define the graph family $\fF(\cH)$ as being $\cH$-free -- that is, $G \in \fF(\cH)$ if no graph from $\cH$ appears in $G$ as a (not necessarily induced) subgraph. The Tur\'an number $\ex_r(n,\cH)$ is defined by
\[ \ex_r(n,\cH) := \max\{ |E(G)| : G \in \fF(\cH), |V(G)| \leqs n\}, \]
where $G$ here is an $r$-uniform graph. The Tur\'an $(r,t)$-problem is then determining $\ex_r(n,\tetra{t}{r})$, where we omit the braces for convenience since $\cH = \{\tetra{t}{r}\}$ is a singleton. We refer the reader to Keevash's comprehensive survey \cite{keevashsurv} for a detailed account of hypergraph Tur\'an-type problems.

A fruitful class of extremal problems in spectral graph theory arise from looking at spectral analogues of classical Tur\'an-type problems. For an ordinary graph $G$, the spectral radius (of its adjacency matrix) $\rho_2(G)$ is a good substitute for the number of edges. This leads to the so-called ``spex" or spectral (Tur\' an) problem of determining
\begin{equation}\label{eqn: spex22} \spex_{2,2}(n,\cH) := \sup\{ \rho_2(G) : G\in \fF(\cH), |V(G)| \leqs n\}, \end{equation}
where in this definition $G$ is an ordinary graph. Although some isolated results were known already, the systematic study of spectral Tur\' an problems was initiated by Nikiforov \cite{Nikiforovpaths}. Furthermore, while the study of $\spex_{2,2}$ is interesting in its own right, it also has applications to studying $\ex_2$; for example, Nikiforov \cite{Nikiforov07} proved that the unique spectral extremal graph for $K_{s+1}$ is the Tur\' an graph $T_s(n)$. On combining this result with the straightforward observation that the spectral radius upper bounds the average degree of the graph, Nikiforov's result implies the upper bound on $\ex_2(K_{s+1})$ implicit in Tur\'an's theorem. In addition, Nikiforov \cite{NZ}, Babai and Guiduli \cite{BG} also proved spectral versions of the K\H{o}vari-S\' os-Tur\' an theorem for complete bipartite graphs $K_{s, t}$. Moreover, combining these with the observation on average degrees produces upper bounds that match the best known upper bounds for $\ex_2(n, K_{s, t})$, obtained by F\" uredi \cite{F2}.  
Very recently, several more papers determining $\spex_{2,2}(n, F)$ for various families of graphs have been published (see \cite{cioabua2022spectral, cioabua2022spectralerdossos,  LLT, NO, SSbook, Wilf86, YWZ, ZW, ZWF}) and this line of research has gained renewed interest.
In fact, spectral Tur\' an problems fit into a broader framework of problems called Brualdi-Solheid problems \cite{BS} that investigate the maximum spectral radius over all graphs belonging to a given family of graphs. Numerous results may be interpreted as belonging to this framework (see \cite{BZ, BLL, EZ, FN, nosal1970eigenvalues, S, SAH}).

This raises naturally the question of spectral analogues of hypergraph Tur\'an problems. Several problems of this nature have been considered (see \cite{ElLuWa22,GaChHo22,HoChCo21,klmspex} for a selection of results in this direction), but there is no comprehensive theory for these types of problems. One major roadblock in this avenue is the fact that there are several not necessarily equivalent ways in the literature to define the spectrum and spectral radius of a hypergraph. We adopt the following perspective on spectral radius due to Keevash--Lenz--Mubayi \cite{klmspex}, explored further by Nikiforov \cite{nikiforovanalyticpublished}:
\begin{definition}[Keevash--Lenz--Mubayi]
For any $1\leqs p \leqs \infty$, we denote the \emph{$p$-spectral radius} of $G$ by $\rho_p(G)$, which is given by
\[ \rho_p(G) = \sup_{\bx \in \pS} L_G(\bx) = \sup_{\pnorm{\bx} = 1} L_G(\bx), \]
where $\pS = \{ \bx \in \R^V : \pnorm{\bx} = 1\}$ is the unit ball in the $\ell^p$ norm.
\end{definition}
\begin{remark} \label{rem: rhoinlit}
The quantity $\rho_1(G)$ is sometimes called the \emph{Lagrangian} in the literature. The quantity $\rho_r(G)$ is the maximum eigenvalue with respect to the algebraic definition of hypergraph spectra provided by Cooper and Dutle \cite{cooperdutle}. The quantity $\rho_2(G)$ is the notion of hypergraph spectral radius introduced by Friedman and Wigderson \cite{friedmanwigderson}. Finally, $\rho_\infty(G) = r! |E(G)|$.
\end{remark}
\begin{remark} In analogy with the $p = r = 2$ case, we shall call a vector $\bx \in \pS$ which maximizes $L_G(\bx)$ a \emph{principal eigenvector} for the $\ell^p$ norm. Note that there is always a principal eigenvector with $\bx \geqs 0$, and hence the supremum in the above definition could be taken over $\ppS = \{ \bx \in \pS : x_v \geqs 0 \text{ for all } v \in V\}$ instead. For the rest of this note, when we say principal eigenvector, we always assume that it is component-wise nonnegative. 
\end{remark}

\begin{remark} \label{rem: perronfrobeniusfails} The Perron-Frobenius theory for $p = r = 2$ might suggest that the principal eigenvector $\bx$ of a connected $r$-graph $G$ is unique and satisfies $x_v > 0$ for every $v \in G$. Nikiforov \cite[Theorem~5.10]{nikiforovanalyticpublished} showed that this is true if $p \geqs r \geqs 2$. For $1 < p < r$ on the other hand, by \cite[Proposition~4.8]{nikiforovanalyticpublished} principal eigenvectors may not be unique and by \cite[Proposition~4.3]{nikiforovanalyticpublished} some principal eigenvectors might have $x_v = 0$ for some $v \in G$.  \end{remark}

\begin{remark}
In light of the previous remark, we will say `a' principal eigenvector instead of `the', unless uniqueness is clear. Further, we define
\[ \pppS = \{ \bx \in \pS : x_v > 0 \text{ for all } v \in V\}, \]
and if $\bx \in \pppS$ is a principal eigenvector of $G$, then we call it a \emph{Perron-Frobenius eigenvector}. 
\end{remark}

From this perspective, the analogue of \eqref{eqn: spex22} is given by
\begin{equation}\label{eqn: spexrp} \spex_{r,p}(n,\cH) := \sup\{ \rho_p(G) : G\in \fF(\cH), |V(G)| \leqs n\}, \end{equation}
where here $G$ is now an $r$-graph. The Tur\'an density is defined by
\[ \pi_r(\fF(\cH)) := \lim_{n\to\infty}\binom{n}{r}^{-1} \ex_r(n,\cH) = \lim_{n\to\infty} r! n^{-r} \ex_r(n,\cH), \]
which exists due to \cite{KaNeSi64}. This amounts to studying the highest order term of $\ex_r(n,\cH)$ except in degenerate cases (e.g., if $r = 2$ and every graph in $\cH$ is bipartite), where it is $0$. One can analogously define the spectral Tur\'an density by
\[ \lambda_{r,p}(\fF(\cH)) := \lim_{n\to\infty}n^{-r(1-1/p)} \spex_{r,p}(n,\cH). \]

A result of Nikiforov \cite[Theorem 9.3]{nikiforovanalyticpublished} implies the following:
\begin{theorem}[Nikiforov] \label{ethm: nikiforov}
For $\cH$ a set of $r$-graphs and $p > 1$, 
\[ \lambda_{r,p}(\fF(\cH)) = \pi_r(\fF(\cH)). \]
\end{theorem}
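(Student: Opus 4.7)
The plan is to prove $\lambda_{r,p}(\fF(\cH)) \geqs \pi_r(\fF(\cH))$ and $\lambda_{r,p}(\fF(\cH)) \leqs \pi_r(\fF(\cH))$ separately. The lower bound is essentially by inspection: for each $n$, let $G_n \in \fF(\cH)$ attain $\ex_r(n, \cH)$ on $n$ vertices, and plug the uniform vector $\bx = n^{-1/p}\mathbf{1} \in \ppS$ into $L_{G_n}$. This gives $\rho_p(G_n) \geqs L_{G_n}(\bx) = r!\, \ex_r(n, \cH)\, n^{-r/p}$, so $n^{-r(1-1/p)} \rho_p(G_n) \geqs r!\, n^{-r} \ex_r(n, \cH)$, which tends to $\pi_r(\fF(\cH))$ as $n\to\infty$.

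For the upper bound, fix $\epsilon > 0$; I would show that every $\cH$-free $G$ on at most $n$ vertices and every $\bx \in \ppS$ satisfy $L_G(\bx) \leqs (\pi_r(\fF(\cH)) + \epsilon) n^{r(1-1/p)} + O_\epsilon(1)$ via a random-sampling argument. Set $t = 1/\supnorm{\bx} \geqs 1$ and sample $S \subseteq V(G)$ by including each vertex $v$ independently with probability $p_v = tx_v \in [0,1]$. Since $G[S]$ is $\cH$-free (being a subgraph of the $\cH$-free $G$), $e(G[S]) \leqs \ex_r(|S|, \cH)$. Computing expectations gives
\[
\E[e(G[S])] = \sum_{e \in E(G)} \prod_{v \in e} p_v = \frac{t^r L_G(\bx)}{r!}, \quad \E\!\left[\binom{|S|}{r}\right] = t^r \sum_{|T|=r} \prod_{v \in T} x_v \leqs \frac{t^r \onorm{\bx}^r}{r!},
\]
where the last inequality follows from $(\sum_v x_v)^r \geqs r! \sum_{|T| = r} \prod_{v \in T} x_v$ by the multinomial expansion.

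Next, choose $m_0$ large enough that $\ex_r(m, \cH) \leqs (\pi_r(\fF(\cH)) + \epsilon) \binom{m}{r}$ whenever $m \geqs m_0$, while for smaller $m$ one has the trivial bound $\ex_r(m, \cH) \leqs m_0^r$. Taking expectations in $e(G[S]) \leqs \ex_r(|S|, \cH)$ yields
\[
\frac{t^r L_G(\bx)}{r!} \leqs (\pi_r(\fF(\cH)) + \epsilon) \frac{t^r\onorm{\bx}^r}{r!} + m_0^r.
\]
Rearranging, using $t \geqs 1$ to bound the error by $r!\, m_0^r$, and invoking the power-mean inequality $\onorm{\bx} \leqs n^{1-1/p}\pnorm{\bx} = n^{1-1/p}$ gives $L_G(\bx) \leqs (\pi_r(\fF(\cH)) + \epsilon) n^{r(1-1/p)} + O_\epsilon(1)$. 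Taking the supremum over $G, \bx$, dividing by $n^{r(1-1/p)}$ (which goes to infinity because $r(1-1/p) > 0$), sending $n \to \infty$, and then $\epsilon \to 0$ completes the upper bound.

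The central step is the random-sampling inequality, whose main delicacy is controlling the additive $O_\epsilon(1)$ error coming from the event that $|S|$ is small (where $\ex_r(m,\cH) \leqs (\pi_r + \epsilon)\binom{m}{r}$ need not hold). This error is absorbed precisely because $r(1-1/p) > 0$ when $p > 1$, which is where the hypothesis $p > 1$ enters crucially. For $p = 1$, the analogue fails, reflecting the well-known fact that $\rho_1$ of an $\cH$-free hypergraph can exceed $\pi_r(\fF(\cH))$ — the familiar gap between the Lagrangian density and the Tur\'an density.
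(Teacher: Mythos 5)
The paper does not actually prove Theorem~\ref{ethm: nikiforov} --- it is quoted from Nikiforov's Theorem~9.3 in \cite{nikiforovanalyticpublished} and stated without proof --- so there is no ``paper's own proof'' to compare against. Evaluating your argument on its own merits: it is correct and self-contained. The lower bound, plugging the uniform vector into a Tur\'an-extremal graph and using $\pi_r(\fF(\cH)) = \lim_n r!\,n^{-r}\ex_r(n,\cH)$, is immediate. The upper bound is a clean random-sampling (deletion-method) argument. Your choice of $t = 1/\supnorm{\bx}$ ensures the inclusion probabilities $p_v = t x_v$ lie in $[0,1]$; the identity $\E[e(G[S])] = t^r L_G(\bx)/r!$ and the estimate $\E\bigl[\binom{|S|}{r}\bigr] = t^r\sum_{|T|=r}\prod_{v\in T}x_v \leqs t^r\onorm{\bx}^r/r!$ are both correct. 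The pointwise bound $e(G[S]) \leqs \ex_r(|S|,\cH) \leqs (\pi_r + \epsilon)\binom{|S|}{r} + m_0^r$ is valid for all realizations of $|S|$ (including $|S| < r$, where both sides degenerate sensibly), so taking expectations is legitimate. After cancelling $t^r/r!$, bounding the error by $r!m_0^r$ (using $t \geqs 1$, which follows from $\supnorm{\bx} \leqs \pnorm{\bx} = 1$), and applying H\"older to get $\onorm{\bx} \leqs n^{1-1/p}$, you correctly obtain $\spex_{r,p}(n,\cH) \leqs (\pi_r+\epsilon)n^{r(1-1/p)} + O_\epsilon(1)$, and since $r(1-1/p) > 0$ the constant is harmless in the limit. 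Your closing remark, that $p>1$ is precisely what makes the additive error negligible and that the statement fails at $p=1$ (Lagrangian density versus Tur\'an density), correctly identifies where the hypothesis is used. This sampling technique is a standard and natural route for results of this type; it may well resemble what Nikiforov does, but since the present paper does not reproduce his argument I cannot compare in detail. In any case your proof is sound.
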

\begin{remark} One may view this theorem as saying that the combinatorial and spectral Tur\'an problems are asymptotically the same.\end{remark}

We now return to our discussion of the Tur\'an $(r,t)$-problem. The smallest unsolved case of the $(r,t)$-problem is $r = 3$, $t = 4$, which is sometimes called the Tur\'an tetrahedron problem, and will be of special interest to us. Tur\'an showed that $\pi_3(\tetra{4}{3}) \geqs \frac{5}{9}$ by constructing a family of $\tetra{4}{3}$-free graphs and conjectured that this bound is optimal. Brown \cite{brownconst} later showed that on $n = 3k$ vertices, there are $\asymp n$ different graphs which achieve the conjectural optimum. Further, Kostochka \cite{kostochkaconst} found classes of constructions subsuming both Tur\'an's and Brown's, thereby showing that there are exponentially many graphs on $n = 3k$ vertices achieving the conjectural optimum. Finally, Frohmader \cite{frohmaderconst} modified Kostochka's construction to include the cases $n = 3k+1$ and $n = 3k+2$. This glut of conjecturally optimal examples perhaps explains some of the difficulties in approaching this problem.

At this point it is worth remarking that all the constructions given above are extremely technical. However, Fon-der-Flaas \cite{fdfconst} provided a method for constructing a $3$-graph $G(\Gamma)$ from a simple directed $2$-graph $\Gamma$ (i.e., no loops or multiple edges) and then showed that $G(\Gamma)$ avoids the complement of $\tetra{4}{3}$ if and only if $\Gamma$ avoids a directed $4$-cycle. This gives a nice conceptual way to think about Kostochka's construction. 

Recall the standard notation
\[ \Delta(G) := \max_{v \in V(G)} \deg v, \qquad \delta(G) := \min_{v \in V(G)} \deg v, \]
for the maximum and minimum degree of $G$. Our first main result is motivated by the following observation (whose proof we leave to the reader), which appears to have gone unremarked in the literature:
\begin{proposition} \label{prop: kostochkaregular}
All graphs in Kostochka's construction are regular. Furthermore, all graphs $G$ in Frohmader's modified construction are almost regular, and satisfy
\[ \Delta(G) = \delta(G) + \Theta(n),\]
as $n = |V(G)| \to \infty$.
\end{proposition}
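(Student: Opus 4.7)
The plan is to verify both claims directly from the constructions themselves, since the proposition is essentially a bookkeeping exercise on two explicit families.

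For the regularity of Kostochka's construction, I would first recall that Kostochka's construction is built on a balanced partition of $V$ into blocks of equal size (with $n = 3k$) together with a carefully specified edge set generalizing both Tur\'an's and Brown's constructions. The key observation is that the construction admits a transitive group of automorphisms (typically a cyclic group $\Zf{n}$ acting on $V$, or equivalently one can view the construction through the lens of the Fon-der-Flaas digraph framework mentioned in the excerpt: for an appropriate vertex-transitive digraph $\Gamma$, the associated $3$-graph $G(\Gamma)$ inherits vertex-transitivity). Once transitivity is established, regularity is immediate because any vertex can be mapped to any other by an edge-preserving bijection. An alternative route, if one does not wish to invoke symmetry, is to compute $\deg(v)$ directly using the defining rules for edges of $G$ and observe that the count depends only on the global parameters of the construction and not on $v$.

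For Frohmader's modified construction, which handles the cases $n = 3k+1$ and $n = 3k+2$ by adjoining one or two vertices to a Kostochka graph on $3k$ vertices and adjusting edges accordingly, I would split the statement $\Delta(G) = \delta(G) + \Theta(n)$ into the upper bound and the lower bound. For the upper bound $\Delta(G) - \delta(G) = O(n)$: starting from a $d$-regular Kostochka graph with $d = \Theta(n^2)$, adjoining a vertex $v$ changes the degree of any other vertex $u$ by at most the codegree $\codeg(u,v)$, which is bounded by the number of remaining vertices, hence by $n-2$; a symmetric argument handles edge deletions needed to preserve $\tetra{4}{3}$-freeness. Summing over the (at most two) added vertices still gives a change of $O(n)$ per vertex, so $\Delta - \delta = O(n)$. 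For the lower bound $\Delta(G) - \delta(G) = \Omega(n)$: here one must examine Frohmader's specific rule for attaching the new vertex (or vertices) and exhibit a pair $u_1, u_2$ whose codegrees with the new vertices differ by $\Theta(n)$. In Frohmader's construction, this imbalance is built into the way the new vertex is glued to the partition blocks of the underlying Kostochka graph, so one checks that some block receives $\Theta(n)$ more incident edges through the new vertex than another.

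The main obstacle is the lower bound $\Omega(n)$: the upper bound and the regularity of Kostochka are fairly mechanical, but showing that Frohmader's extension is irregular by $\Theta(n)$ rather than by $o(n)$ requires locating explicit high- and low-degree vertices in the modified construction and tracking the exact asymmetry introduced when passing from $n = 3k$ to $n = 3k+1$ or $3k+2$. This is the step that would benefit most from writing out Frohmader's edge rules verbatim, whereas the rest of the proof is short enough to justify the phrase ``whose proof we leave to the reader'' used in the excerpt.
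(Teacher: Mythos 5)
There is a genuine gap: your write-up is a plan rather than a proof, and both substantive claims of the proposition are deferred at exactly the point where the content lies. For Kostochka's graphs, the regularity claim rests on an unverified assertion of vertex-transitivity (``typically a cyclic group $\Zf{n}$ acting on $V$''). This is almost certainly not available: the whole point of Kostochka's family is that arbitrary discrete choices (the data distinguishing the exponentially many non-isomorphic extremal graphs on $3k$ vertices) are built into the edge rules, and generic members of the family admit no transitive automorphism group. Vertex-transitivity of the Fon-der-Flaass digraph $\Gamma$ also does not hold for the general member of the family, so the symmetry route collapses. The correct route is the ``alternative'' you mention but do not execute: compute $\deg(v)$ from the edge rules and check that the arbitrary choices cancel, so that the degree depends only on the class size $k$ and not on $v$ or on the choices. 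That computation is the proof, and it is absent.

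For Frohmader's construction the situation is the same but more serious. Your description of it as ``adjoining one or two vertices to a Kostochka graph on $3k$ vertices and adjusting edges'' is a guess about the construction; Frohmader's graphs for $n=3k+1,3k+2$ are defined directly on three nearly balanced classes (sizes differing by one) with Kostochka-type edge rules, and the degree spread $\Delta(G)-\delta(G)=\Theta(n)$ comes from comparing the degree of a vertex in a class of size $k+1$ with that of a vertex in a class of size $k$: each degree is a quadratic polynomial in the class sizes, and the difference of these polynomials is linear in $k$. You explicitly flag this lower bound as the main obstacle and then leave it to ``examining Frohmader's specific rule,'' which is precisely the assertion $\Delta-\delta=\Omega(n)$ being proved. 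The $O(n)$ upper bound you sketch also has a soft spot: if one really did pass from a $3k$-vertex graph by adding vertices and then deleting edges to restore $\tetra{4}{3}$-freeness, the deletions are not a priori bounded per vertex by a codegree count, so even that half would need the explicit degree formulas. In short, nothing in the proposal pins down a single degree in either construction, so neither the regularity statement nor the $\Theta(n)$ statement is established; the proof must be completed by writing out the edge rules and carrying out the degree counts.
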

\begin{remark} One has that $\Delta(G), \delta(G) \asymp n^2$. Further, a standard cloning argument shows that $\Delta(G) - \delta(G) < n$ if $G$ is an extremal graph on $n$ vertices which is $\cH$-free if $\cH$ is $2$-covering (i.e., if for every $H \in \cH$, $u,v\in V(H)$, there is an edge $e \supset \{u,v\}$). The above proposition implies that the upper bound is essentially sharp for Frohmader's modified construction. \end{remark}
If an $r$-graph $G$ is regular, then it has a principal eigenvector $\bx \in \ppS$ which is uniform (i.e. $x_v = n^{-1/p}$ for all $v \in V$). One standard measure of non-uniformity is the principal ratio.

\begin{definition} For $1< p<\infty$, an $r$-graph $G$, and a principal eigenvector $\bx \in \ppS$ of $G$, we define the \emph{principal ratio} of $(G,\bx)$ by
\[ \gamma(G,\bx) = \sup_{u,v \in V(G)} \frac{x_u}{x_v}. \]
\end{definition}
\begin{remark}
When the principal eigenvector is unique, or when the dependence on the choice of principal eigenvector cannot cause confusion, we will suppress $\bx$ from the notation.
\end{remark}
\begin{remark}
If a principal eigenvector satisfies $\bx \in \ppS \setminus \pppS$, which may happen as noted in Remark~\ref{rem: perronfrobeniusfails}, then we set $\gamma(G,\bx) = \infty$. 
\end{remark}

It is clear that $\gamma(G,\bx) = 1$ if $G$ has a uniform principal eigenvector $\bx$, and further that $\gamma(G,\bx)\geqs 1$ in general. One may then ask, motivated by Proposition~\ref{prop: kostochkaregular} and the discussion thereafter, whether a principal eigenvector of an extremal graph for a Tur\'an problem is always almost uniform. Our first main result says that for several classes of spectral Tur\'an problems this is essentially true. To state this, we introduce some more notation,
\[ \cF(n) := \{ G \in \cF : |V(G)| \leqs n \},\]
\[ \Lambda_p(\cF,n) := \sup_{G \in \cF(n)} \rho_p(G), \]
where $\cF$ is any family of graphs, and we write $\fF(\cH,n)$ for $\cF(n)$ when $\cF = \fF(\cH)$.

Our theorem is then,
\begin{theorem}
\label{thm: principalupper}
Let $\cH$ be a set of $2$-covering $r$-graphs such that no graph in $\cH$ is $r$-partite, let $n \in \N$, and let $1<p<\infty$. Further, suppose that $G\in \fF(\cH,n)$ is a graph such that $\rho_p(G) = \Lambda_p(\fF(\cH),n)$. Finally, let $\bx \in \pppS$ be a Perron-Frobenius eigenvector of $G$. Then, 
\[ \gamma(G,\bx) = 1 + O_{p,r}(n^{-1}).\] 
\end{theorem}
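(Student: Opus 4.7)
The plan is to combine a spectral cloning argument with the Euler--Lagrange equation for the Perron--Frobenius eigenvector. Write $\rho = \rho_p(G)$, $M = \max_v x_v$, $m = \min_v x_v$, and $L_G^v(\bx) := (r-1)!\sum_{e\ni v}x^{e\setminus v}$. Since $\bx \in \pppS$ is a critical point of $L_G$ restricted to $\pS$, Lagrange multipliers yield the eigenvalue identity
\[ L_G^v(\bx) = \rho\, x_v^{p-1} \qquad \text{for every } v \in V(G). \]
For a pair $u, v$ with $x_u \geqs x_v$, I will let $G_{u \to v}$ denote the graph formed by deleting every edge of $G$ through $v$ and then inserting $(e \setminus \{u\}) \cup \{v\}$ for each $e \ni u$ with $v \notin e$. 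The $2$-covering hypothesis on $\cH$ guarantees that $G_{u \to v}$ is still $\cH$-free: no edge of $G_{u \to v}$ contains both $u$ and $v$, so any copy of $H \in \cH$ using both would contradict $2$-coverage, while copies using at most one of $\{u, v\}$ transport back to copies in $G$. Extremality then forces $L_{G_{u \to v}}(\bx) \leqs \rho$, and expanding this via the eigenvalue identity at $v$ together with the decomposition $L_G^u(\bx) = L_{G_{u \to v}}^v(\bx) + x_v D_{uv}$, where $D_{uv} := (r-1)!\sum_{e \supset \{u,v\}} x^{e \setminus \{u,v\}}$, simplifies after a short computation to the key cloning inequality
\[ \rho\,(x_u^{p-1} - x_v^{p-1}) \leqs x_v\, D_{uv}. \qquad(\star) \]

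The crucial step is to sum $(\star)$ over all $v \neq u$ for $u$ fixed as a maximizer. A short interchange of summation (using $x_v \cdot x^{e \setminus \{u, v\}} = x^{e \setminus u}$ together with $|e \setminus u| = r - 1$) produces the telescoping identity $\sum_{v \neq u} x_v D_{uv} = (r-1) L_G^u(\bx) = (r-1) \rho M^{p-1}$, which rearranges to $\sum_v x_v^{p-1} \geqs (n - r + 1) M^{p-1}$. H\"older's inequality applied to $\|\bx\|_p = 1$ caps $\sum_v x_v^{p-1} \leqs n^{1/p}$, and combining with the trivial $M \geqs n^{-1/p}$ (from $\sum x_v^p = 1$ by pigeonhole) pins down
\[ M = n^{-1/p}\bigl(1 + O(n^{-1})\bigr). \]

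To finish, I apply $(\star)$ to the pair realizing $(M, m)$ and bound $D_{uv} \leqs (r-1) n^{r-2} M^{r-2}$ via the trivial codegree bound $d(u, v) \leqs \binom{n-2}{r-2}$ and $x_w \leqs M$. The non-$r$-partite hypothesis forces $\pi_r(\fF(\cH)) > 0$ (the balanced complete $r$-partite $r$-graph is $\cH$-free), so Theorem~\ref{ethm: nikiforov} gives $\rho \geqs c\, n^{r - r/p}$ for some $c = c(\cH, r, p) > 0$. Substituting the pinned value of $M$ and $m \leqs M$ yields $M^{p-1} - m^{p-1} \leqs C\, n^{1/p - 2}$, and dividing through by $M^{p-1} \geqs n^{-(p-1)/p}$ produces $1 - (m/M)^{p-1} = O(n^{-1})$. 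Since $p > 1$ this rearranges to $\gamma(G, \bx) = M/m = 1 + O_{p,r}(n^{-1})$, as claimed. The hard part will be the averaging step: the pairwise inequality $(\star)$ alone is too weak to control $\gamma$ in the regime $p - 1 < r - 2$, since its right-hand side can grow like $\gamma^{r-2}/n$ while its left grows only like $\gamma^{p-1}$; it is the telescoping identity for $\sum_{v \neq u} x_v D_{uv}$ that converts the family of cloning inequalities into a single sharp bound on $M$, after which Nikiforov's asymptotic and the trivial codegree bound close the argument.
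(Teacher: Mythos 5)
Your proof is correct, and it reaches the conclusion by a slightly different (arguably cleaner) route than the paper's. Both proofs derive the same pointwise cloning inequality from the eigenequation plus extremality and $2$-covering: your $(\star)$ is, up to notation, the paper's inequality leading to $\gamma\leqs(1+\beta')^{1/(p-1)}$. The divergence is in how you close the argument. The paper first proves a ``cheap'' bound $\gamma\ll_{p,r}1$ by a self-improving step (using the eigenequation at the minimizer to get $\beta'\ll\gamma^{-1}$, whence $\gamma\ll\gamma^{-1/(p-1)}$), then deduces $x_{\min}\asymp x_{\max}\asymp n^{-1/p}$, and only then substitutes the density lower bound on $\rho$ to get $\beta'\ll n^{-1}$. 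Your argument instead averages: summing $(\star)$ over all $v\neq u$ for $u$ the maximizer, the codegree sums telescope to $\sum_{v\neq u}x_vD_{uv}=(r-1)L_G^u(\bx)=(r-1)\rho M^{p-1}$, and H\"older on $\pnorm{\bx}=1$ gives $\sum_v x_v^{p-1}\leqs n^{1/p}$, which directly pins $M=n^{-1/p}(1+O(n^{-1}))$ in one step, with no bootstrap. A single further application of $(\star)$ to the max/min pair together with the trivial codegree bound and $\rho\gg_{p,r}n^{r(1-1/p)}$ (which should more precisely be justified by Lemma~\ref{lem: density}, since Theorem~\ref{ethm: nikiforov} is a limiting statement) then closes the argument. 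What your averaging buys is the avoidance of the paper's two-stage self-improvement; what it costs is nothing substantial -- the density input and the small-$n$/$|V(G)|<n$ bookkeeping are the same minor points one must handle in either version. The observation at the end of your write-up is exactly right: the single pairwise inequality is too weak when $p-1<r-2$, and the telescoping identity is precisely what rescues it -- this parallels the role of the paper's $\beta'\ll\gamma^{-1}$ step, but is arguably a more transparent mechanism.
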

\begin{remark}
This sharpens an inequality essentially due to Nikiforov \cite[pp. 24-26]{nikiforovanalyticpreprint} albeit under more restrictive hypotheses; the weaker bound
\[ \gamma(G,\bx) = 1 + O_{p,r}\bigg(\frac{1}{\log n}\bigg) \]
follows simply from his Claim B, and holds for all extremal graphs $G\in \cF(n)$ in any hereditary family $\cF$. 
\end{remark}

\begin{remark} \label{rem: erdosdensity} It follows from a classical result of Erd\H{o}s \cite{erdoshypergraphESS} that the hypothesis that no graph in $\cH$ is $r$-partite is equivalent to the assertion that $\fF(\cH)$ is dense, that is to say that $\pi_r(\fF(\cH)) > 0$. It is the density of $\fF(\cH)$ that is necessary for the argument to work. \end{remark}
\begin{remark}
Theorem~\ref{thm: principalupper} will follow from a general result about spectral-extremal graphs in so-called clonal families. See Section~\ref{sec: families} and Proposition~\ref{prop: principalupper}.\end{remark}

One may ask whether Theorem~\ref{thm: principalupper} is sharp. To facilitate our investigations of this question, we have our second main result, which is of independent interest:

\begin{theorem} \label{thm: CTforpnorm}
Let $G$ be a connected $r$-graph and let $1< p < \infty$. Then, for any nonnegative principal eigenvector $\bx \in \ppS$ of $G$, 

\[ \gamma(G,\bx) \geqs \bigg(\frac{\Delta(G)}{\delta(G)}\bigg)^{\frac{1}{p+r-2}}. \] 
\end{theorem}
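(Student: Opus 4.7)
The plan is to derive the $p$-eigenvalue equation that a positive maximizer of $L_G$ on $\pS$ must satisfy, evaluate it at vertices of maximum and minimum degree, and divide to eliminate $\rho_p(G)$. If $\bx \in \ppS \setminus \pppS$ then $\gamma(G,\bx) = \infty$ by convention and the bound is vacuous, so assume $\bx \in \pppS$. The Lagrange multiplier condition for the constrained maximization of $L_G$ on $\pS$ reads
\[ r! \sum_{e \ni v} \prod_{u \in e \setminus \{v\}} x_u = \lambda p\, x_v^{p-1} \qquad \text{for all } v \in V(G); \]
pairing this with $\bx$ and using Euler's identity $\sum_v x_v \partial_v L_G(\bx) = r L_G(\bx) = r\rho_p(G)$ pins down $\lambda = r\rho_p(G)/p$, so the eigenvalue equation simplifies to
\[ (r-1)! \sum_{e \ni v} \prod_{u \in e \setminus \{v\}} x_u = \rho_p(G)\, x_v^{p-1}. \]

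Next, pick $u^*$ with $\deg u^* = \Delta(G)$ and $w^*$ with $\deg w^* = \delta(G)$, and write $x_{\max} = \max_v x_v$, $x_{\min} = \min_v x_v$. Bounding each monomial $\prod_{u \in e \setminus \{u^*\}} x_u$ from below by $x_{\min}^{r-1}$ in the equation at $u^*$, and each monomial in the equation at $w^*$ from above by $x_{\max}^{r-1}$, yields
\[ \rho_p(G)\, x_{u^*}^{p-1} \geqs (r-1)!\, \Delta(G)\, x_{\min}^{r-1} \quad \text{and} \quad \rho_p(G)\, x_{w^*}^{p-1} \leqs (r-1)!\, \delta(G)\, x_{\max}^{r-1}. \]
Since $p > 1$, the map $t \mapsto t^{p-1}$ is monotone increasing on $(0,\infty)$; combining this with $x_{u^*} \leqs x_{\max}$ and $x_{w^*} \geqs x_{\min}$ allows me to replace $x_{u^*}$ by $x_{\max}$ on the left of the first inequality and $x_{w^*}$ by $x_{\min}$ on the left of the second. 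Dividing the two resulting inequalities cancels $\rho_p(G)$ and produces
\[ \frac{x_{\max}^{p-1}}{x_{\min}^{p-1}} \geqs \frac{\Delta(G)}{\delta(G)} \cdot \frac{x_{\min}^{r-1}}{x_{\max}^{r-1}}, \]
which, upon rearranging and taking the $(p+r-2)$-th root, is exactly the claimed bound.

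There is no serious obstacle here: the only step requiring care is the Lagrange multiplier derivation, which is legitimate precisely because $\bx \in \pppS$ is an interior point at which the defining function $\pnorm{\cdot}^p$ is smooth with nonvanishing gradient. The connectedness hypothesis on $G$ is not explicitly used in the calculation above; it enters only through the separate fact recalled in Remark~\ref{rem: perronfrobeniusfails} that for $p \geqs r$ the principal eigenvector of a connected $r$-graph automatically lies in $\pppS$, ensuring the bound is non-trivial in that regime. For $1 < p < r$ the bound is still formally correct but may be rendered vacuous on principal eigenvectors that touch the boundary of $\ppS$.
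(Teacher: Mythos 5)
Your proof is correct and follows essentially the same route as the paper's: you re-derive the eigenequation (the paper cites it as Lemma~\ref{lem: eigen}, you obtain it from a Lagrange multiplier argument, which is the same content), then evaluate at a max-degree and a min-degree vertex, bound the monomials by $x_{\min}^{r-1}$ and $x_{\max}^{r-1}$ respectively, pass to $x_{\max}$ and $x_{\min}$ on the left-hand sides using monotonicity of $t \mapsto t^{p-1}$, and divide to cancel $\rho_p(G)$. This is exactly what the paper does, modulo the paper multiplying the two inequalities rather than dividing them, which is an immaterial rearrangement.
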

\begin{remark}
This generalizes a result of Cardoso--Trevisan \cite[Corollary 3.3]{cardosotrevisan} and Li--Zhou--Bu \cite[Theorem 3.1(i)]{lizhoubu} who proved the case $p = r$, which itself built on work of Cioab\u{a}--Gregory \cite[p. 373]{cioabagregory} and Zhang \cite[Theorem 2.3]{zhangprincipalratio} who independently proved the case $p = r = 2$. Our proof proceeds along similar lines. 
\end{remark}

To see what this yields about the sharpness of Theorem~\ref{thm: principalupper}, we introduce a new definition.
\begin{definition} We say a set of $r$-graphs $\cH$ is \emph{coarse}, if no graph in $\cH$ is $r$-partite, and furthermore, if for every $G \in \fF(\cH,n)$ such that $\rho_p(G) = \Lambda_p(\fF(\cH),n)$ one either has that $\Delta(G) = \delta(G)$ or that 
\[ \Delta(G) - \delta(G) =\Omega(n^{r-2}), \]
where the implicit constant here depends only on $\cH$. 
\end{definition}

\begin{remark} \label{rem: r=2coarse}Note that for $2$-graphs, this is trivially not asking more of $\cH$ than the property that it has no bipartite graph as a member, since if $\Delta \neq \delta$, then $\Delta - \delta \geqs 1$. 
\end{remark}

\begin{remark} Given Theorem~\ref{ethm: nikiforov}, it is reasonable to suspect that for many $\cH$ (particularly for those with $\fF(\cH)$ dense), if $G$ is a spectral-extremal graph in $\fF(\cH)$, then there must be an edge-extremal graph $G' \in \fF(\cH)$ such that $G$ and $G'$ are close in structure. If this is the case for the Tur\'an tetrahedron problem, then Proposition~\ref{prop: kostochkaregular} suggests that perhaps $\{ \tetra{4}{3}\}$ is coarse. This is the observation that motivated Theorem~\ref{thm: CTforpnorm} above and Corollary~\ref{corr: sharpness} below. \end{remark}

\begin{remark} While, unsatisfactorily, we are unable to actually show that $\{\tetra{4}{3}\}$ is coarse, there are examples in the literature which are provably coarse; see Remark~\ref{rem: coarseexamples} below.
\end{remark} 

\begin{corollary} \label{corr: sharpness}
Let $G$ and $\bx$ be as in Theorem~\ref{thm: principalupper}, with the additional assumption that $\cH$ is coarse and $G$ is not regular. Then, one has
\[ \gamma(G,\bx) = 1 + \Theta_{p,r}(n^{-1}).\] 
\end{corollary}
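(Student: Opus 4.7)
The plan is to combine Theorem~\ref{thm: principalupper} and Theorem~\ref{thm: CTforpnorm} with the definition of coarseness. The upper bound $\gamma(G,\bx) \leqs 1 + O_{p,r}(n^{-1})$ comes for free from Theorem~\ref{thm: principalupper}, whose hypotheses we inherit; all the work is in establishing the matching lower bound $\gamma(G,\bx) \geqs 1 + \Omega_{p,r}(n^{-1})$.

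First I would unpack the hypotheses. Non-regularity of $G$ gives $\Delta(G) > \delta(G)$, and coarseness then forces $\Delta(G) - \delta(G) \geqs c n^{r-2}$ for some $c > 0$ depending only on $\cH$. Pairing this with the trivial bound $\delta(G) \leqs \Delta(G) \leqs \binom{n-1}{r-1} = O_r(n^{r-1})$ available for any $r$-graph on at most $n$ vertices, and dividing, I obtain
\[ \frac{\Delta(G)}{\delta(G)} = 1 + \frac{\Delta(G) - \delta(G)}{\delta(G)} \geqs 1 + \Omega_{r,\cH}(n^{-1}). \]
Next, applying Theorem~\ref{thm: CTforpnorm} and then using the first-order Taylor expansion of $(1+t)^{1/(p+r-2)}$ at $t=0$ yields
\[ \gamma(G,\bx) \geqs \left(\frac{\Delta(G)}{\delta(G)}\right)^{1/(p+r-2)} \geqs \bigl(1 + \Omega(n^{-1})\bigr)^{1/(p+r-2)} = 1 + \Omega_{p,r,\cH}(n^{-1}), \]
so in combination with the upper bound, $\gamma(G,\bx) = 1 + \Theta_{p,r}(n^{-1})$, as required (with $\cH$ regarded as fixed).

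The one genuine subtlety I foresee is verifying the hypothesis of Theorem~\ref{thm: CTforpnorm} that $G$ is connected. As noted in Remark~\ref{rem: perronfrobeniusfails}, Perron--Frobenius--type rigidity fails for $p \neq r$, so having a strictly positive principal eigenvector $\bx \in \pppS$ does not by itself rule out $G$ being disconnected. I would expect connectedness to follow from the spectral-extremality of $G$ together with the density of $\fF(\cH)$ provided by Remark~\ref{rem: erdosdensity}, via a standard replacement argument (substituting a clone of the dominant component for a weaker one and checking that $\cH$-freeness is preserved). This is the main place where a small amount of care is needed; the rest of the argument is a clean two-line deduction from the preceding theorems.
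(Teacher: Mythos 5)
Your proposal matches the paper's proof essentially verbatim: the upper bound is quoted from Theorem~\ref{thm: principalupper}, and the lower bound comes from coarseness plus non-regularity giving $\Delta(G)/\delta(G) - 1 \gg_r n^{r-2}/n^{r-1} = n^{-1}$, followed by Theorem~\ref{thm: CTforpnorm} and the estimate $(1+x)^h = 1+\Omega_h(x)$ as $x \to 0^+$. Your connectedness worry is harmless but unnecessary (and the paper does not address it): the hypothesis $\bx \in \pppS$ inherited from Theorem~\ref{thm: principalupper} is all that the proof of Theorem~\ref{thm: CTforpnorm} actually uses, so no separate extremality-based connectivity argument is needed.
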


\begin{proof} The upper bound implicit in the assertion follows from Theorem~\ref{thm: principalupper}. For the lower bound, note that since $\cH$ is coarse and $G$ is not regular, it follows that
\[ \frac{\Delta(G)}{\delta(G)} - 1 = \frac{\Delta(G)- \delta(G)}{\delta (G)} \gg_r \frac{n^{r-2}}{n^{r-1}}  = n^{-1}\]
whence the lower bound follows by applying Theorem~\ref{thm: CTforpnorm} to $G$, and using the estimate $(1+x)^h = 1 + \Omega_h(x)$ as $x \to 0^+$. 
\end{proof}

\begin{remark} \label{rem: coarseexamples} There are several examples in the literature where the spectral-extremal graphs $G$ in $\fF(\cH)$ are well-understood, at least in the limit $V(G) \to \infty$. Since coarseness is an asymptotic property, this permits one to routinely check whether the sets of forbidden graphs involved are coarse. In all examples of this kind in the literature that the authors are aware of, the set of forbidden graphs is coarse. This includes the Fano plane \cite[Corollary 1.6]{klmspex} and color-critical $r$-graphs with chromatic number $r+1$ \cite[Corollary 1.5]{klmspex}. Putting the focus on the family $\fF(\cH)$ instead of $\cH$, one also has the examples of cancellative $3$-graphs \cite{ni2022spectral} and $\ell$-partite $r$-graphs with $\ell > r$ \cite{kpartitespex}. 
\end{remark}

This raises the following question:
\begin{question} Is there a set $\cH$ such that $\fF(\cH)$ is dense, but $\cH$ is not coarse?
\end{question}
We do not address this question, and would be very interested in a resolution of it. By Remark~\ref{rem: r=2coarse}, any such example must have $r \geqs 3$. 


Theorem~\ref{thm: principalupper} and Corollary~\ref{corr: sharpness} also raise the interesting question of when a principal eigenvector of an extremal graph is Perron-Frobenius. Nikiforov \cite[Section 5]{nikiforovanalyticpublished} extensively studied the general Perron-Frobenius theory in this setting. As discussed in Remark~\ref{rem: perronfrobeniusfails}, if $p \geqs r$, then all principal eigenvectors are also Perron-Frobenius. To discuss what happens in the regime $1 < p < r$, we recall the following definition:
\begin{definition}[Nikiforov] Let $G$ be an $r$-graph with $E(G) \neq \emptyset$ and $1 \leqs k \leqs r-1$. We say that $G$ is \emph{$k$-tight} if, for any $U \subsetneq V(G)$ such that $U$ induces an edge, there is an edge $e \in E(G)$ such that
\[ k\leqs |e \cap U| \leqs r - 1. \]
\end{definition}
\begin{remark}
It is easy to see that $1$-tightness is equivalent to connectedness for nonempty graphs. 
\end{remark}
Nikiforov \cite{nikiforovanalyticpublished} showed that $k$-tightness was a sufficient condition for the positivity part of Perron-Frobenius theory to work. 
\begin{theorem}[Nikiforov]
Let $G$ be an $r$-graph, $1 \leqs k \leqs r-1$, and $p > r-k$. Then, if $G$ is $k$-tight and $\bx$ is a principal eigenvector of $G$, then $\bx \in \pppS$. That is,
\[ x_v > 0 \text{ for all } v \in V(G). \]
\end{theorem}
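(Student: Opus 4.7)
The plan is to argue by contradiction: suppose $\bx$ is a principal eigenvector with $U := \{v \in V(G) : x_v = 0\}$ nonempty, and construct a perturbation $\bx_\epsilon \in \ppS$ with $L_G(\bx_\epsilon) > L_G(\bx)$ for small $\epsilon > 0$, contradicting the maximality of $\bx$. Let $W := V(G) \setminus U$ be the support of $\bx$. Then $W \subsetneq V(G)$ is proper, and $W$ must induce at least one edge of $G$: since $G$ is $k$-tight we have $E(G) \neq \emptyset$ and hence $\rho_p(G) > 0$, but only the edges $e \subseteq W$ contribute to $L_G(\bx) = \rho_p(G)$. Applying the $k$-tightness hypothesis to $W$ therefore yields an edge $e^* \in E(G)$ with $k \leqs |e^* \cap W| \leqs r - 1$, equivalently $1 \leqs j := |e^* \cap U| \leqs r - k$. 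The hypothesis $p > r - k$ then gives the crucial inequality $j < p$.

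Now define $\bx_\epsilon \in \ppS$ by $(\bx_\epsilon)_v := \epsilon$ for $v \in U$ and $(\bx_\epsilon)_v := \lambda(\epsilon) x_v$ for $v \in W$, where $\lambda(\epsilon) := (1 - |U| \epsilon^p)^{1/p}$ is chosen so that $\pnorm{\bx_\epsilon} = 1$; a direct expansion gives $\lambda(\epsilon) = 1 - \Theta(\epsilon^p)$. Grouping edges $f \in E(G)$ by $j' := |f \cap U|$, one has
\[ L_G(\bx_\epsilon) = \lambda(\epsilon)^r L_G(\bx) + r! \sum_{j' = 1}^{r} \lambda(\epsilon)^{r-j'} \epsilon^{j'} \alpha_{j'}, \]
where $\alpha_{j'} := \sum_{f \in E(G),\, |f \cap U| = j'} \prod_{v \in f \cap W} x_v \geqs 0$. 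The witness $e^*$ forces $\alpha_j > 0$ (all $W$-entries of $e^*$ are strictly positive), so $j^* := \min\{j' \geqs 1 : \alpha_{j'} > 0\}$ satisfies $j^* \leqs j \leqs r - k < p$. Expanding in small $\epsilon$,
\[ L_G(\bx_\epsilon) - L_G(\bx) = r!\, \alpha_{j^*} \epsilon^{j^*} - \frac{r |U|}{p} L_G(\bx) \epsilon^{p} + O(\epsilon^{j^* + 1}) + O(\epsilon^{2p}), \]
and since $j^* < p$ with $\alpha_{j^*} > 0$, the difference is strictly positive for all sufficiently small $\epsilon > 0$ — the desired contradiction.

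The essence of the argument is the identification of two facts: the support $W$ inducing an edge is forced by $\rho_p(G) > 0$, and $k$-tightness is precisely calibrated to guarantee an edge with at most $r - k$ vertices outside $W$, so the perturbation gain scales like $\epsilon^{j^*}$ with $j^* \leqs r - k$. The only mild obstacle is the asymptotic bookkeeping — ensuring that the $\Theta(\epsilon^p)$ rescaling cost does not outweigh the $\Theta(\epsilon^{j^*})$ gain — which is exactly where the strict inequality $p > r - k$ is needed.
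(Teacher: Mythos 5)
The paper states this theorem as a citation to Nikiforov \cite{nikiforovanalyticpublished} and does not supply a proof of its own, so there is no internal argument to compare against. Your perturbation proof is nevertheless correct and complete, and it is in the spirit of the standard approach (and, as far as I can tell, of Nikiforov's original one): you show that the support $W$ of a maximizer must induce an edge since $\rho_p(G) > 0$, invoke $k$-tightness on $W$ to locate a crossing edge $e^*$ with $1 \leqs |e^* \cap U| \leqs r - k$, and then show that raising the zero coordinates to $\epsilon$ produces a gain of order $\epsilon^{j^*}$ while the $\ell^p$-renormalization only costs $\Theta(\epsilon^p)$; the strict inequality $p > r - k \geqs j^*$ makes the gain dominate for small $\epsilon$, contradicting maximality. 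The bookkeeping is sound: $\lambda(\epsilon)^{r-j'} = 1 + O(\epsilon^p)$ uniformly, the error terms $O(\epsilon^{j^*+1})$, $O(\epsilon^{j^*+p})$, $O(\epsilon^{2p})$ are all $o(\epsilon^{j^*})$, and $\alpha_{j^*} > 0$ because all $W$-coordinates of $e^*$ are strictly positive. One tiny presentational caveat: you should note at the outset that one may take $\bx \geqs 0$ without loss of generality (as the paper does in its remarks), so that $\bx_\epsilon \in \ppS$ and $L_G(\bx_\epsilon) \leqs \rho_p(G) = L_G(\bx)$ is the contradiction you actually invoke.
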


In view of the above discussion, we ask the following question:
\begin{question}
Let $\cH$ be set of $r$-graphs, and suppose that $G \in \fF(\cH)$ is spectral-extremal (that is, $\rho_p(G) = \Lambda_p(\fF(\cH),n)$ where $n = |V(G)|$). Under what conditions on $\cH$ does it follow that $G$ must be $k$-tight?
\end{question}

This question appears hard. Recall that $G \in \fF(\cH)$ is called edge-maximal if for any $G'$ on the same vertex set, $G' \in \fF(\cH)$ implies $G' \subseteq G$. It is a standard fact that spectral-extremal graphs in $\fF(\cH)$ must be edge-maximal. Thus, a simpler question is:
\begin{question} \label{ques: maximaltight}
Let $\cH$ be set of $r$-graphs, and suppose that $G \in \fF(\cH)$ is edge-maximal. Under what conditions on $\cH$ does it follow that $G$ must be $k$-tight?
\end{question}

We can partially answer this question. To state our results in this regard, we need some more definitions.

\begin{definition}
If $G$ is an $r$-graph and $1 \leqs k \leqs r-1$, then a \emph{$k$-bridge} in $G$ is an edge $e \in E(G)$ for which there exists a partition $V(G) = A \sqcup B$ so that $e$ is the unique edge with the property that 
\[ |e \cap A| \geqs k \text{ and } e \cap B \neq \emptyset. \]
An $r$-graph is called \emph{$k$-bridged} if it has a $k$-bridge and \emph{$k$-bridgeless} if it does not.
\end{definition}


\begin{remark} \label{rem: 1bridge} When $r = 2$, then $k = 1$, and the notion of a $1$-bridge is the usual notion of a bridge in graph theory. Note also that a $1$-bridge for $r \geqs 3$ is also an edge cut. \end{remark}

\begin{definition} \label{def: k-tightfamily}
A family of graphs $\cF$ is called a \emph{$k$-tight family} if for every edge-maximal $G \in \cF$ we have that $G$ is a $k$-tight graph. \end{definition}

Our third main result is the following:
\begin{theorem} \label{thm: bridgelessimpliestight}
Let $H$ be a $k$-bridgeless $r$-graph for $1 \leqs k \leqs r-1$. Then $\fF(H)$ is a $k$-tight family.
\end{theorem}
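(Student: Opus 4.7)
The plan is to argue by contradiction: assume $G \in \fF(H)$ is edge-maximal but fails to be $k$-tight, and use this to produce a $k$-bridge of $H$, contradicting the hypothesis. By the failure of $k$-tightness, there is a witness $U \subsetneq V(G)$ that contains some edge of $G$ and for which every edge $e \in E(G)$ satisfies either $|e \cap U| < k$ or $e \subseteq U$. The idea is to construct a single non-edge $e^*$ crossing the partition $(U, V(G) \setminus U)$ in a suitable way, use edge-maximality to extract a copy of $H$ in $G + e^*$ that must use $e^*$, pull the partition back to $V(H)$, and then use $k$-bridgelessness of $H$ to locate a \emph{second} crossing edge that already lives in $G$, giving the needed contradiction.

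Concretely, since $U$ contains an edge we have $|U| \geqs r$, and since $U \neq V(G)$ there is some $u \in V(G) \setminus U$. Let $e^*$ consist of $u$ together with any $r-1$ vertices of $U$, so that $|e^* \cap U| = r-1 \in [k, r-1]$; the failure of $k$-tightness then forces $e^* \notin E(G)$. By edge-maximality, $G + e^*$ contains a copy of $H$ via some injection $\phi \colon V(H) \hookrightarrow V(G)$, and since $G$ itself is $H$-free this copy must use $e^*$. Let $f \in E(H)$ be the unique edge with $\phi(f) = e^*$, and define the partition $V(H) = A \sqcup B$ by $A := \phi^{-1}(U)$ and $B := V(H) \setminus A$. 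Injectivity of $\phi$ gives $|f \cap A| = |e^* \cap U| = r - 1 \geqs k$, and $\phi^{-1}(u) \in f \cap B$, so $f \cap B \neq \emptyset$.

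If $f$ were the unique edge of $H$ with $|f \cap A| \geqs k$ and $f \cap B \neq \emptyset$, then by definition $f$ would be a $k$-bridge of $H$, contradicting the hypothesis. Hence there exists $f' \in E(H) \setminus \{f\}$ with $|f' \cap A| \geqs k$ and $f' \cap B \neq \emptyset$. Injectivity of $\phi$ yields $\phi(f') \neq \phi(f) = e^*$, so $\phi(f') \in E(G)$. Moreover $\phi(A) \subseteq U$ and $\phi(B) \cap U = \emptyset$, so $|\phi(f') \cap U| = |f' \cap A| \geqs k$ while $\phi(f') \cap (V(G) \setminus U) \supseteq \phi(f' \cap B) \neq \emptyset$. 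Thus $\phi(f')$ is an edge of $G$ with $k \leqs |\phi(f') \cap U| \leqs r - 1$, contradicting the defining property of $U$.

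The argument is essentially bookkeeping once $e^*$ is chosen well; the one small subtlety, which I flag as the main (minor) obstacle, is that the most natural choice $|e^* \cap U| = k$ would require $|V(G) \setminus U| \geqs r - k$, which need not hold for the witness $U$. I sidestep this by instead taking $|e^* \cap U| = r - 1$, which always exists since $|U| \geqs r$ and $V(G) \setminus U$ is merely non-empty, and which still lies in the forbidden range $[k, r-1]$ because $k \leqs r - 1$.
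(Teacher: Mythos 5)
Your proof is correct and follows essentially the same route as the paper: both pick a non-edge consisting of $r-1$ vertices of the witness set $U$ plus one outside vertex, invoke edge-maximality to find a copy of $H$ through it, partition the copy's vertices by membership in $U$, and use $k$-bridgelessness to produce a second crossing edge already in $G$, contradicting the choice of $U$. The subtlety you flag (taking $|e^*\cap U| = r-1$ rather than $k$) is handled identically in the paper.
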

\begin{remark} Another way to state this theorem is that if $H$ is $k$-bridgeless and $G$ is edge-maximal with respect to the property of being $H$-free, then $G$ is $k$-tight. In particular, any principal eigenvector of $G$ for $p > r - k$ is a Perron-Frobenius eigenvector.
\end{remark}
We believe that Theorem~\ref{thm: bridgelessimpliestight} is sharp; if true, this would completely resolve Question~\ref{ques: maximaltight}. That is, we conjecture the following:
\begin{conjecture} \label{conj: bridgelessifftight}
Let $H$ be a connected $r$-graph and $1\leqs k \leqs r-1$. Then
\[ H \text{ is }k\text{-bridgeless} \iff \fF(H) \text{ is a }k\text{-tight family}. \]
\end{conjecture}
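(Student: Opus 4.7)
The plan is to prove the conjecture by contraposition. One direction (k-bridgeless implies $k$-tight family) is Theorem~\ref{thm: bridgelessimpliestight}, so it suffices to construct, for each $k$-bridged connected $r$-graph $H$, an edge-maximal $H$-free graph $G$ that fails to be $k$-tight. Fix a witness $k$-bridge $e^* \in E(H)$ with associated partition $V(H) = A \sqcup B$; write $b = |e^* \cap B| \in [1, r-k]$ and let $T^* := \{|f \cap B| : f \in E(H) \setminus \{e^*\}, |f \cap A| < k\} \subseteq [r-k+1, r]$ encode the ``shapes'' of $H$'s non-bridge cross edges. I would take $V(G) = U \sqcup W$ with $|W| = c := \max(T^* \cup \{b\})$ and $|U| = N$ for a value $N < |V(H)|$ to be calibrated, and set
\[ E(G) = \binom{U}{r} \cup \bigcup_{t \in T^*} \Big\{S \cup T : S \in \binom{U}{r-t},\, T \in \binom{W}{t}\Big\}. \]
This is motivated by $r=3, k=2$ examples I computed: for the tight $3$-path ($T^* = \{2\}$) this yields $\binom{U}{3} \cup \{\{u\} \cup W : u \in U\}$ with $|W|=2$, and for the two-edge loose path ($T^* = \{3\}$) it yields $\binom{U}{3} \cup \{W\}$ with $|W|=3$; in both cases one checks directly that $G$ is edge-maximal $H$-free and fails to be $2$-tight.

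Next, I would verify three properties. First, $G$ is not $k$-tight: every edge of $G$ either lies in $U$ (shape $(r,0)$) or has $|e \cap U| = r-t \leq k-1$ for some $t \in T^*$, so no edge has $|e \cap U| \in [k, r-1]$, and since $U$ induces edges the partition $U, W$ witnesses non-$k$-tightness. Second, $G$ is edge-maximal: for any missing edge $e$, one has $|e \cap U| \in [k, r-1]$ and $e \cap W \neq \emptyset$, and I would construct an embedding $\phi: V(H) \to V(G)$ with $\phi(A) \subseteq U$, $\phi|_B: B \to W$ a bijection, $\phi(e^*) = e$, and each non-bridge cross edge $f$ of $H$ mapped to a shape-matching cross edge of $G$. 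Third---and hardest---$G$ is $H$-free: one must show that no injection $\phi: V(H) \to V(G)$ is edge-preserving.

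The main obstacle is the $H$-freeness step. For any hypothetical embedding $\phi$, set $A' = \phi^{-1}(U)$ and $B' = \phi^{-1}(W)$; edge-preservation forces every edge $f$ of $H$ to satisfy $|f \cap B'| \in \{0\} \cup T^*$, with $|A'| \leq N$ and $|B'| \leq c$. The canonical partition $(A, B)$ fails this test because $|e^* \cap B| = b \leq r-k < \min T^*$, so $e^*$ itself is a ``bad'' edge. The challenge is ruling out every alternative partition $(A', B')$ of $V(H)$; this appears to require a delicate combinatorial analysis based on how $B'$ overlaps with $e^*$, $A$, and $B$, and it is genuinely unclear to me that it goes through in full generality. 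For $H$ with multiple incompatible $k$-bridges or with unusual edge shapes the construction might accidentally contain $H$ via a non-canonical partition, in which case fallback strategies include (a) fine-tuning $N$ and the list of included shapes, (b) switching to a disjoint-union construction in the spirit of the sunflower $S_1 \sqcup S_2$ that handles the $r=3, k=2$ loose path (where $H$ has no non-bridge cross edges), or (c) establishing existence non-constructively via extremal or probabilistic arguments. It is the need for a single clean approach that works uniformly for all $k$-bridged $H$ that makes Conjecture~\ref{conj: bridgelessifftight} substantive.
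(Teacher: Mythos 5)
This statement is a \emph{conjecture} in the paper, not a proved theorem: the authors write ``we conjecture the following'' and explicitly leave the converse direction open, so there is no complete paper proof against which to compare yours. What the paper does establish is (i) the forward direction, Theorem~\ref{thm: bridgelessimpliestight}, which you correctly cite; (ii) the converse under the extra hypothesis that $H$ is $k$-plateaued (Theorem~\ref{thm2: plateau}); and (iii) the $r = 2$ special case (Corollary~\ref{corr: 1bridgelessiff1tight}). You are therefore attempting something the authors left open, and you are right to recognize that.

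Your partial strategy is also genuinely different from the paper's partial converse. The paper starts from $G'$ equal to a disjoint union of copies of $K_{|V(H)|-1}$ and passes to an \emph{arbitrary} $H$-saturated supergraph $G \supseteq G'$; saturation makes $H$-freeness and edge-maximality automatic, and the entire burden lands on showing $G$ is not $k$-tight, which is exactly where the $k$-plateaued hypothesis and the $\lambda$-plateau lemma are used. Your construction instead builds $G$ explicitly on $U \sqcup W$ with a single clique on $U$ and cross-edges whose shapes mirror $T^*$, shifting the burden onto proving $H$-freeness directly. You are right that this is the real obstacle, and in fact the gap is a bit wider than you state: beyond non-canonical embeddings with $\phi^{-1}(W) \neq B$, your edge-maximality step only treats missing edges $e$ with $|e\cap U| \in [k,r-1]$, whereas there may also be missing cross-edges with $|e\cap U| \leqs k-1$ but $|e\cap W| \notin T^*$; and the intended bijection $\phi|_B\colon B \to W$ requires $|B| = c$, which the $k$-bridge structure does not guarantee. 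These are genuine gaps, and your honest flag on $H$-freeness is precisely where the conjecture remains open. Your fallback (b), the disjoint-union construction, is in fact closest in spirit to what the paper actually does.
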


As evidence for the above conjecture we give a construction (Theorem~\ref{thm2: plateau} in Section~\ref{sec: tightfamilies}) that verifies the above conjecture under a technical assumption on $H$ (see Definition~\ref{def: plateaud}). We make little effort here to optimize this construction and we hope to return to the $r = 3$ case of Question~\ref{ques: maximaltight} in a later paper. For now, we content ourselves with highlighting the fact that the arguments behind Theorem~\ref{thm2: plateau} will settle the simplest case of Conjecture~\ref{conj: bridgelessifftight}. 

\begin{corollary} \label{corr: 1bridgelessiff1tight}
Let $H$ be a connected $2$-graph. Then
\[ H \text{ is }1\text{-bridgeless} \iff \fF(H) \text{ is }1\text{-tight}. \]
\end{corollary}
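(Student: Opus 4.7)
The forward implication $(\Rightarrow)$ is immediate, being the $r=2$, $k=1$ specialization of Theorem~\ref{thm: bridgelessimpliestight}. So the real content lies in the converse $(\Leftarrow)$, which I will prove by contrapositive: assuming $H$ has a bridge, I will exhibit a disconnected $G \in \fF(H)$ which is edge-maximal in $\fF(H)$, thereby witnessing that $\fF(H)$ fails to be a $1$-tight family.

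So let $e = uv$ be a bridge of $H$, write $H - e = H_1 \sqcup H_2$ with $u \in V(H_1)$ and $v \in V(H_2)$, and put $h = |V(H)|$ and $h_i = |V(H_i)|$, so that $h_1, h_2 \geqs 1$ and $h_1 + h_2 = h$. The construction I propose is to take disjoint vertex sets $V_1, V_2$ of size $h - 1$ each and let $G$ be the disjoint union of the complete graphs on $V_1$ and $V_2$. Then $G$ is manifestly disconnected, and $G \in \fF(H)$ since $H$ is connected while each component of $G$ has only $h - 1 < h$ vertices, precluding any copy of $H$.

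The substance of the argument is edge-maximality. Any non-edge of $G$ has the form $\{a, b\}$ with $a \in V_1$ and $b \in V_2$. Since $h_i \leqs h - 1 = |V_i|$, I can embed $H_1$ into the clique on $V_1$ with $u \mapsto a$, and symmetrically embed $H_2$ into the clique on $V_2$ with $v \mapsto b$; every non-bridge edge of $H$ lies inside $H_1$ or $H_2$, so its image lands inside the corresponding clique and is present in $G$, while the bridge $e = uv$ itself maps to the newly added edge $\{a, b\}$. Thus $G + \{a,b\}$ contains a copy of $H$, confirming edge-maximality. Combined with the $(\Rightarrow)$ direction above, this yields the biconditional.

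The only wrinkle is the degenerate case $H = K_2$, where $h = 2$ and the construction produces two isolated vertices; the resulting edge-maximal empty graph does not satisfy the premise $E(G) \neq \emptyset$ of the $k$-tightness definition and must be handled by convention. I expect this to be the sole, and entirely cosmetic, obstacle; the core argument in the previous paragraph is otherwise a direct explicit construction with no hidden difficulty.
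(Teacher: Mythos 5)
Your proof is correct and follows essentially the same route as the paper: the forward direction is the $r=2$, $k=1$ case of Theorem~\ref{thm: bridgelessimpliestight}, and for the converse you build a disjoint union of cliques $K_{|V(H)|-1}$ (the paper uses $\ell$ copies, you use two) and verify saturation by splitting $H$ at its bridge and embedding $H_1$, $H_2$ into the two cliques with the added cross edge playing the role of the bridge. Your explicit flagging of the degenerate $H=K_2$ case is a minor extra care the paper does not spell out, but it changes nothing substantive.
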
 

\subsection*{Acknowledgements} This project began at the AIM workshop ``Spectral graph and hypergraph theory: connections and applications''. We thank the American Institute of Mathematics, as well as the organizers of the workshop, Sebastian Cioab\u{a}, Krystal Guo, and Nikhil Srivastava for facilitating our collaboration. We also thank Himanshu Gupta,  Leslie Hogben, Xizhi Liu, and Dhruv Mubayi for useful discussions. AS is partially supported through Purdue University start-up funding available to Trevor Wooley.

\subsection*{Organization of the paper} In Section~\ref{sec: notation}, we collect, for the convenience of the reader, a list of common notations, and some definitions. In Section~\ref{sec: preliminaries}, we state some useful lemmata, including some preliminaries from the spectral theory of hypergraphs. In Section~\ref{sec: families}, we introduce the more general setting of clonal families and state our results in that context. In Section~\ref{sec: principalupper}, we prove Theorem~\ref{thm: principalupper} via Proposition~\ref{prop: principalupper}. In Section~\ref{sec: CTforpnorm}, we prove Theorem~\ref{thm: CTforpnorm}. Finally, in Section~\ref{sec: tightfamilies}, we prove Theorem~\ref{thm: bridgelessimpliestight} and report our partial progress on Question~\ref{ques: maximaltight} and Conjecture~\ref{conj: bridgelessifftight} by proving Theorem~\ref{thm2: plateau} and its corollaries.

\section{Notation} \label{sec: notation}

We use $A = B + O(C)$ equivalently with $A - B \ll C$ only when $C$ is positive to mean that there exists a constant $K > 0$ such that $|A - B| \leqs KC$, where $A, B, C$ are quantities which depend on various parameters, but the constant $K$ shall be uniform in those parameters. If $f \ll g$ and $g \ll f$, we write $f \asymp g$ or $f = \Theta(g)$. In some cases, the quantity $K$ may be functions of other parameters in the hypotheses, so for clarity, we sometimes subscript these asymptotic notations by the object or quantity they depend upon.

Recall that, for a family $\cF$, $\cF(n)$ is the family of all graphs in $\cF$ with at most $n$ vertices. Further recall,
\[ \Lambda_p(\cF,n) = \sup_{G \in \cF(n)} \rho_p(G). \]
We also define 
\[ \Pi(\cF,n) = \frac{1}{r!} \Lambda_\infty(\cF,n),\]
so that as per Remark~\ref{rem: rhoinlit}, $\Pi(\cF,n)$ is the maximum number of edges in any graph in $\cF$ with $n$ vertices. When there is no possibility of confusion, we may suppress $p$, $\cF$, or $n$ from the notation. 


\section{Preliminaries and lemmata} \label{sec: preliminaries}

\subsection{Spectral theory of hypergraphs}
The main tool we require from the spectral theory is a fundamental lemma in the form of the eigenequation.
\begin{lemma}[Principal Eigenequation; {\cite[Theorem~3.1]{nikiforovanalyticpublished}}] \label{lem: eigen} Let $r \geqs 2$, $1 < p < \infty$, and $G$ be an $r$-graph. Then, any nonnegative principal eigenvector $\bx \in \ppS$ satisfies, for every $v \in V(G)$,

\[ \rho_p(G) x_v^{p-1} = \frac{1}{r} \frac{\partial}{\partial x_v} L_G(\bx) = (r-1)!\sum_{\substack{e \in E(G)\\ v \in e}} x^{e - v}. \]
\end{lemma}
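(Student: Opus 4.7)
The plan is a standard Lagrange-multiplier derivation, taking care of the boundary case where some $x_v = 0$ (which is permitted when $1 < p < r$ by Remark~\ref{rem: perronfrobeniusfails}).

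First, by direct differentiation of $L_G(\bx) = r! \sum_{e \in E} x^e$, I would compute
\[ \frac{\partial L_G}{\partial x_v}(\bx) = r! \sum_{\substack{e \in E(G) \\ v \in e}} x^{e-v} = r \cdot (r-1)! \sum_{\substack{e \in E(G) \\ v \in e}} x^{e-v}, \]
which establishes the second equality in the statement. For the first equality, I would apply the KKT conditions to the constrained maximization $\sup_{\bx \in \ppS} L_G(\bx)$. At the maximizer $\bx$, there exist a multiplier $\lambda \in \R$ and nonnegative multipliers $\mu_v \geqs 0$ with $\mu_v x_v = 0$ for all $v$ satisfying the stationarity condition $\frac{\partial L_G}{\partial x_v} = \lambda p x_v^{p-1} - \mu_v$.

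To identify $\lambda$, I would invoke Euler's identity for the homogeneous polynomial $L_G$ of degree $r$: namely $\sum_v x_v \frac{\partial L_G}{\partial x_v} = r L_G(\bx) = r \rho_p(G)$. Multiplying the stationarity condition by $x_v$, summing over $v$, and using $\mu_v x_v = 0$ along with $\sum_v x_v^p = \pnorm{\bx}^p = 1$, we obtain $\lambda = r \rho_p(G)/p$. Consequently for every $v$ with $x_v > 0$, complementary slackness forces $\mu_v = 0$ and we recover $\frac{1}{r} \frac{\partial L_G}{\partial x_v} = \rho_p(G) x_v^{p-1}$, as desired.

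The remaining case is $x_v = 0$, where the desired identity reduces to showing that $\sum_{e \ni v} x^{e-v} = 0$; both sides of the equation then vanish trivially. Suppose for contradiction that this sum equals some $c > 0$. Choose any $w$ with $x_w > 0$ (which must exist, since $L_G(\bx) = \rho_p(G) > 0$ provided $E(G) \neq \emptyset$) and perform the perturbation $x_v \mapsto \epsilon$, $x_w \mapsto (x_w^p - \epsilon^p)^{1/p}$, preserving the $\ell^p$ norm. A direct expansion yields $L_G(\bx') - L_G(\bx) = r! \, c \, \epsilon + O(\epsilon^p)$, where the error term bounds the contribution of all monomials touched by the change in $x_w$. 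Since $p > 1$, this is strictly positive for all sufficiently small $\epsilon > 0$, contradicting the maximality of $\bx$.

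The main technical wrinkle is really only this boundary argument; it cannot be omitted, because without it the identity could genuinely fail at vertices $v$ with $x_v = 0$, which is precisely the phenomenon driving the failure of Perron-Frobenius positivity recorded in Remark~\ref{rem: perronfrobeniusfails}. Once the interior/boundary dichotomy is handled, the computation is purely mechanical, and the critical ingredients, namely homogeneity of $L_G$ and the strict convexity of $t \mapsto t^{p-1}$ at $t = 0$ for $p > 1$, fall naturally out of the setup.
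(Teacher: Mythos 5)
The paper does not actually prove this lemma; it is cited verbatim from Nikiforov \cite[Theorem~3.1]{nikiforovanalyticpublished}, so there is no in-text proof to compare against. Your proof, however, is essentially correct as a standalone argument. The differentiation giving the second equality is immediate, the constraint qualification (LICQ) holds at any $\bx \in \ppS$ since $\nabla \lVert \cdot \rVert_p^p$ is supported exactly on $\{v : x_v > 0\} \neq \emptyset$ and is therefore independent of the coordinate vectors $e_v$ for the active sign constraints, and the Euler-identity trick correctly identifies $\lambda = r\rho_p(G)/p$.

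Two small remarks. First, the perturbation argument you run for the boundary case $x_v = 0$ is not needed once you have committed to KKT: stationarity gives $\frac{\partial L_G}{\partial x_v} = \lambda p x_v^{p-1} - \mu_v = -\mu_v \leqs 0$ when $x_v = 0$ and $p>1$, and the reverse inequality $\frac{\partial L_G}{\partial x_v} = r!\sum_{e\ni v} x^{e-v} \geqs 0$ is automatic from $\bx \geqs 0$; these together force $\frac{\partial L_G}{\partial x_v}=0$ with no further work. (Your perturbation is, in effect, a from-scratch proof of the sign of the KKT multiplier, so it is valid but redundant; alternatively, you could drop the KKT machinery entirely and run perturbations for both cases.) Second, the closing heuristic about ``strict convexity of $t\mapsto t^{p-1}$ at $t=0$'' is off: for $1<p<2$ this function is concave, and it is not differentiable at $0$ in that range. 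What is actually being used is simply that $0^{p-1}=0$ for $p>1$, so the constraint term $\lambda p x_v^{p-1}$ vanishes at the boundary and cannot absorb a putative positive first-order gain in the objective.
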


\subsection{Dense families and properties thereof}
We introduce now a notion of dense families; our results only apply to these families.
\begin{definition} A family $\cF$ is called \emph{dense} if 
\[ \Pi(\cF,n) \gg_\cF n^r. \]
as $n \to \infty$.
\end{definition}

\begin{lemma} \label{lem: density} If $\cF$ is a dense family, then for all $1 \leqs p < \infty$, one has
\[ \Lambda_p(\cF,n) \asymp_\cF n^{r(1-1/p)}. \]
\end{lemma}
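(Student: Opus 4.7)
The plan is to prove the two bounds $\Lambda_p(\cF,n) \gg_\cF n^{r(1-1/p)}$ and $\Lambda_p(\cF,n) \ll n^{r(1-1/p)}$ separately. The lower bound is where the density hypothesis enters; the upper bound, by contrast, should be an absolute bound valid for every $r$-graph on at most $n$ vertices.

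For the lower bound, I would use the uniform eigenvector on an edge-extremal graph. By density, there exists some $G \in \cF(n)$ with $|E(G)| \geqs \Pi(\cF,n) \gg_\cF n^r$. Setting $m = |V(G)| \leqs n$, the trivial upper bound $|E(G)| \leqs m^r/r!$ also forces $m \asymp_\cF n$. Taking $\bx \in \ppS$ to be the uniform vector on $V(G)$ with $x_v = m^{-1/p}$ and plugging into $L_G$ yields $\rho_p(G) \geqs L_G(\bx) = r!\,|E(G)|\,m^{-r/p} \gg_\cF n^{r - r/p}$, using $m \leqs n$ to control the negative power.

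For the upper bound, the key elementary observation is that for any $\bx \geqs 0$ with $\pnorm{\bx} = 1$ and any $r$-graph $G$ on $n$ vertices, we have
\[ L_G(\bx) \;=\; r!\sum_{e \in E(G)} x^e \;\leqs\; \bigg(\sum_{v \in V(G)} x_v\bigg)^r, \]
which follows by expanding the right-hand side via the multinomial theorem and noting that each edge $e \in \binom{V}{r}$ is accounted for $r!$ times on the right (with nonnegative additional cross-terms). Then Hölder's inequality gives $\sum_v x_v = \onorm{\bx} \leqs n^{1-1/p}\pnorm{\bx} = n^{1-1/p}$, so $L_G(\bx) \leqs n^{r(1-1/p)}$, which bounds $\rho_p(G)$ uniformly over $G \in \cF(n)$.

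There is no real obstacle here; the only thing to double-check is the implicit constant bookkeeping so that the dependence of the asymptotic notation on $\cF$ is tracked correctly. In particular, the density constant from the hypothesis $\Pi(\cF,n) \gg_\cF n^r$ must pass through to the lower bound estimate (via the uniform eigenvector computation), while the upper bound has no $\cF$-dependence at all. Combining the two gives the claimed $\asymp_\cF$ statement.
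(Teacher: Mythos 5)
Your proof is correct and follows essentially the same approach as the paper: a uniform test vector on a density-extremal graph for the lower bound, and the general estimate $\rho_p(G) \leqs n^{r(1-1/p)}$ for the upper bound. The only difference is that the paper cites Nikiforov's Proposition~2.6 for the upper bound, whereas you derive it from scratch via the multinomial expansion and H\"older's inequality---a nice self-contained touch, but the same underlying estimate.
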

\begin{proof}
The upper bound holds for all $\cF$, not necessarily dense by applying \cite[Proposition~2.6]{nikiforovanalyticpublished} to get that
\[ \rho_p(G) \leqs n^{r(1-1/p)},\]
for every $G$, and then taking supremum over $G\in \cF(n)$. 

For the lower bound, it suffices to find $G \in \cF(n)$ and $\bx \in \ppS$ so that
\[ L_G(\bx) \gg_\cF n^{r(1-1/p)}. \]
We let $G$ be the graph so that $|E(G)| \gg_\cF n^r$ which exists due to density of $\cF$, and set $\bx = n^{-1/p} \1$. Then,
\[ L_G(\bx) = \sum_{e \in E(G)} (n^{-1/p})^r = n^{-r/p} |E(G)| \gg_\cF n^{r(1-1/p)},\]
as desired. 
\end{proof}

\subsection{Glossary for integer partitions} While not strictly necessary, the language of integer partitions will be convenient to state Theorem~\ref{thm2: plateau}. Recall that one says $\lambda$ \emph{partitions} $r$, and one writes $\lambda \vdash r$ when $\lambda = (\lambda_1,\cdots,\lambda_\ell)$ is a tuple of positive integers satisfying 
\[ \lambda_1 \geqs \lambda_2 \geqs \cdots \geqs \lambda_\ell\] and 
\[ r = \lambda_1 + \lambda_2 + \cdots + \lambda_\ell. \]
The integer $\ell = \ell(\lambda)$ is called the \emph{length} of the partition, and the integers $\lambda_j$ are called the \emph{parts}. We call $\lambda_1$ the \emph{largest part} and $\lambda_\ell$ the \emph{smallest part}. If $\lambda,\mu \vdash r$, we say that $\mu$ \emph{refines} $\lambda$ if the parts of $\lambda$ can be subdivided to produce the parts of $\mu$. Finally, we call the partition $(r) \vdash r$ the trivial partition, and call all other partitions nontrivial.
\section{Clonal families and related notions} \label{sec: families}

The technique of cloning or Zykov symmetrization \cite{Zy49} will be frequently used in our arguments. For a graph $G$, and vertices $u,v \in V(G)$, we define $G_{u\to v}$ to be the graph obtained by deleting all edges incident to $u$, and then, for every $e \in E(G)$ such that $v \in e$ and $u \notin e$, adding the edge $e + u - v$. This is the technique of Zykov symmetrization, and we say that we have cloned $v$ onto $u$. This leads us to a central notion, that characterizes the families of graphs to which our results apply. 

\begin{definition}
A family of graphs $\cF$ is called \emph{clonal} if for every $G \in \cF$ and every $u,v \in G$, $G_{u\to v} \in \cF$.
\end{definition}

In other words, a clonal family is closed under Zykov symmetrization. The following proposition is straightforward and appears elsewhere, but we include the proof for completeness. 
\begin{proposition}\label{prop: 2-cover give clonal} Let $\cH$ be any set of $2$-covering graphs. Then the family $\fF(\cH)$ is clonal. \end{proposition}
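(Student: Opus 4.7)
The plan is to prove the contrapositive: assume there exists $H\in\cH$ such that $G_{u\to v}$ contains a copy of $H$, and show that then $G$ already contained a copy of $H$. I would first unpack the structure of the edge set of $G' := G_{u\to v}$. Namely, $E(G')$ consists of (i) all edges of $G$ not incident to $u$ (some of which may be incident to $v$), together with (ii) the cloned edges $(e\setminus\{v\})\cup\{u\}$ where $e$ ranges over edges of $G$ incident to $v$ but not to $u$. A key observation, which I would record first, is that no edge of $G'$ contains both $u$ and $v$: type (i) edges avoid $u$, and type (ii) edges avoid $v$ by construction.

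Now suppose $H$ embeds into $G'$ and split into three cases according to whether the image of this embedding contains $u$ and/or $v$. If the image contains neither $u$ nor $v$ (or just contains $v$ but not $u$), then all edges of the embedding are type (i), hence edges of $G$ itself, and we obtain a copy of $H$ in $G$ directly. If the image contains $u$ but not $v$, I would define a new map by replacing $u$ with $v$ on vertices. Edges of the embedding not incident to $u$ remain unchanged and lie in $G$; edges incident to $u$ are of the form $(e\setminus\{v\})\cup\{u\}$ for some edge $e\in E(G)$ with $v\in e$, $u\notin e$, and swapping $u$ for $v$ returns precisely $e\in E(G)$. Injectivity of this new map is automatic, and distinctness of the resulting edges follows because no unchanged edge can contain $v$ (since the original image avoided $v$).

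The remaining case is when the image contains both $u$ and $v$. This is where the $2$-covering hypothesis is essential. Let $u',v'\in V(H)$ be the preimages of $u,v$; by $2$-coverage there is an edge $f\in E(H)$ with $\{u',v'\}\subseteq f$. The image of $f$ under the embedding is an edge of $G'$ containing both $u$ and $v$, contradicting the structural observation that $G'$ has no such edge. Thus this case cannot occur, and in every remaining case we produced a copy of $H$ in $G$, contradicting $G\in \fF(\cH)$.

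None of the steps present a serious obstacle; the only subtle point is confirming that the substitution in the case $u$-in, $v$-out actually yields a valid (injective, edge-preserving) embedding into $G$, for which the explicit description of $E(G_{u\to v})$ suffices. The argument is also essentially symmetric in the role played by the cloning direction, so no additional care is required on that front.
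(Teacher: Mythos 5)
Your proof is correct and follows essentially the same route as the paper: the key observation that no edge of $G_{u\to v}$ contains both $u$ and $v$, the use of the $2$-covering hypothesis to rule out a copy of $H$ meeting both $u$ and $v$, and the substitution of $v$ for $u$ to pull the copy back into $G$. Your version merely organizes the paper's argument into an explicit three-case analysis and spells out the verification that the swapped map is an embedding.
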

\begin{proof} We need to show that if $G$ is $\cH$-free, then for every $u,v \in G$, $G_{u\to v}$ is $\cH$-free. To see this, suppose that $G_{u\to v}$ contains a copy $H$ of a graph in $\cH$. If $H$ does not contain $u$, then every edge in $H$ is already in $G$, contradicting the fact that $G$ cannot contain $H$. Thus, $u \in V(H)$. On the other hand, by construction, any edge involving $u$ in $G_{u\to v}$ arises from an edge involving $v$ but not containing $u$ in $G$. 
Moreover, $H$ is a $2$-covering graph but $u$ and $v$ do not share any edges in $G_{u \to v}$, so the copy of $H$ in $G_{u \to v}$ does not contain $v$.
From this we find that there must be a copy $H'$ of $H$ in $G$, which is formed by replacing $u$ by $v$. This also contradicts the fact that $G$ is $\cH$-free, completing the proof.
\end{proof}

In Section~\ref{sec: principalupper}, we shall prove the following proposition.
Theorem~\ref{thm: principalupper} will then follow:

\begin{proposition} \label{prop: principalupper}
Let $\cF$ be a dense and clonal family of $r$-graphs, $n \in \N$, and $1<p<\infty$. Further, suppose that $G\in \cF$ is a graph such that $\rho_p(G) = \Lambda_p(\cF,n)$. Finally, let $\bx \in \pppS$ be a Perron-Frobenius eigenvector of $G$. Then, 
\[ \gamma(G,\bx) = 1 + O_{p,r}(n^{-1}).\] 
\end{proposition}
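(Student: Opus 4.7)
The plan is to exploit the clonality of $\cF$ to compare $G$ against a nearby cloned graph, and then use this comparison together with the principal eigenequation (Lemma~\ref{lem: eigen}) to force near-uniformity of $\bx$. Let $u, v \in V(G)$ be vertices with $x_u = \max_w x_w$ and $x_v = \min_w x_w$, so that $\gamma(G, \bx) = x_u/x_v$, and assume $x_u > x_v$ (the conclusion is trivial otherwise). A few preliminary estimates will be needed: density of $\cF$ and Lemma~\ref{lem: density} give $\rho_p(G) \asymp_{\cF} n^{r(1-1/p)}$; the constraint $\pnorm{\bx} = 1$ together with $n x_v^p \leqs \sum_w x_w^p = 1 \leqs n x_u^p$ gives $x_v \leqs n^{-1/p} \leqs x_u$; and Nikiforov's weak bound $\gamma = 1 + O(1/\log n)$ (see the remark following Theorem~\ref{thm: principalupper}) then pins down $x_u \asymp x_v \asymp n^{-1/p}$.

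Next, I would consider the cloned graph $G' := G_{v \to u}$, which belongs to $\cF$ by clonality and shares its vertex set with $G$, so that $\rho_p(G') \leqs \rho_p(G)$ by extremality. Introduce the test vector $\by$ by setting $y_u = y_v = t := ((x_u^p + x_v^p)/2)^{1/p}$ and $y_w = x_w$ for $w \notin \{u,v\}$; a direct check shows $\pnorm{\by} = 1$, and hence $L_{G'}(\by) \leqs \rho_p(G') \leqs L_G(\bx)$. Stratifying the edges of $G$ by their intersection with $\{u,v\}$ and introducing the abbreviations
\[ A_u := \sum_{\substack{e \in E(G) \\ u \in e,\, v \notin e}} x^{e-u}, \quad A_v := \sum_{\substack{e \in E(G) \\ v \in e,\, u \notin e}} x^{e-v}, \quad B := \sum_{\substack{e \in E(G) \\ u, v \in e}} x^{e-u-v}, \]
a short calculation using the definition of $G_{v\to u}$ yields $L_{G'}(\by) - L_G(\bx) = r!\bigl((2t-x_u) A_u - x_v A_v - x_u x_v B\bigr)$. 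The inequality $L_{G'}(\by) \leqs L_G(\bx)$, combined with the eigenequation $A_v + x_u B = \rho_p x_v^{p-1}/(r-1)!$, then yields the key bound $(2t - x_u) A_u \leqs \rho_p x_v^p/(r-1)!$.

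For $p \geqs 1$ the $\ell^p$-mean dominates the arithmetic mean, so $2t \geqs x_u + x_v$ and hence $2t - x_u \geqs x_v$. Feeding this back into the previous inequality and using the eigenequation $A_u + x_v B = \rho_p x_u^{p-1}/(r-1)!$ at $u$ produces $\rho_p (x_u^{p-1} - x_v^{p-1}) \leqs (r-1)!\, x_v B$. To conclude, I would use the trivial codegree bound $B \leqs \binom{n-2}{r-2} x_u^{r-2}$ (which follows from $x_w \leqs x_u$ for all $w$), which together with the established asymptotics $\rho_p \asymp n^{r(1-1/p)}$ and $x_u \asymp x_v \asymp n^{-1/p}$ gives
\[ \biggl(\frac{x_u}{x_v}\biggr)^{p-1} - 1 \ll_{p,r} \frac{n^{r-2}\, x_u^{r-2}}{\rho_p\, x_v^{p-2}} \ll_{p,r} n^{-1}, \]
after which extracting a $(p-1)$-th root produces the desired $\gamma(G, \bx) = 1 + O_{p,r}(n^{-1})$.

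The main delicacy is twofold. First, cloning must proceed in the correct direction: one clones the lighter vertex $v$ onto the heavier vertex $u$ (giving $G_{v \to u}$), since the opposite choice $G_{u \to v}$ produces an inequality in which the factor analogous to $\rho_p(x_u^{p-1} - x_v^{p-1})$ carries the wrong sign and so yields no information. Second, the choice of $t$ as the $\ell^p$-mean of $x_u$ and $x_v$ (rather than an arithmetic or geometric mean) is crucial both for the norm-preservation $\pnorm{\by} = 1$ and for producing the precise shift $2t - x_u \geqs x_v$ that cancels the $B$ contribution correctly in the subsequent manipulation.
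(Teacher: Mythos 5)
Your key inequality, $\rho_p\bigl(x_u^{p-1} - x_v^{p-1}\bigr) \leqs (r-1)!\, x_v B$, is exactly the one the paper arrives at, though your route is slightly different: the paper tests $G' = G_{v \to u}$ against the unchanged vector $\bx$ (which is already $p$-normalized), whereas you introduce the $\ell^p$-averaged test vector $\by$. Both are correct; your version is a bit more work but the cancellations are arguably cleaner.

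There is, however, a genuine gap in your preliminary step. You invoke Nikiforov's weak bound $\gamma = 1 + O(1/\log n)$ to pin down $x_u \asymp x_v \asymp n^{-1/p}$, citing the remark after Theorem~\ref{thm: principalupper}. But that remark states explicitly that Nikiforov's Claim B applies to extremal graphs in \emph{hereditary} families. Proposition~\ref{prop: principalupper} is stated for dense \emph{clonal} families, and the paper's own counterexample list shows that clonal does not imply hereditary (item (1) in the theorem on non-implications). So your preliminary flatness estimate is not justified under the hypotheses of the proposition. Without $x_u \asymp x_v$, your final computation fails: you only know $x_v \leqs n^{-1/p} \leqs x_u$, which does not control $x_u^{r-2}/x_v^{p-2}$.

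The fix is the bootstrap the paper performs. Feed the trivial lower bound $B x_u \leqs \rho_p x_v^{p-1}/(r-1)!$ (coming from the eigenequation at $v$, keeping only edges through $u$) into your key inequality to obtain $\gamma^{p-1} - 1 \leqs \gamma^{-1}$, hence $\gamma^p - \gamma - 1 \leqs 0$, and therefore $\gamma \ll_p 1$ with no appeal to heredity. That then gives $x_u \asymp x_v \asymp n^{-1/p}$, after which your closing computation goes through as written. With that repair made, the argument is correct and essentially parallel to the paper's.
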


Two related properties of hypergraph families that appear in the literature (see, e.g., \cite{nikiforovanalyticpreprint}) are ``hereditary'' and ``multiplicative''.

\begin{definition}
A family of graphs $\cP$ is called \emph{hereditary} if for every $G \in \cP$ and every $S \subset V(G)$, $G[S] \in \cP$, where $G[S]$ denotes the induced subgraph $(S,E(G) \cap \binom{S}{2})$.
\end{definition}

\begin{definition}
A family of $r$-uniform graphs $\cP$ is called \emph{multiplicative} if for every $G \in \cP$ and every positive vector $\mathbf{t} \in \Z_+^{V(G)}$, the ``blow-up'' graph $G(\mathbf{t}) \in \cP$, where \[ V(G(\mathbf{t})) = \{(v,j) : v \in V(G), \, j \in [t_v]\}\] and \[ E(G(\mathbf{t})) = \bigg\{\{(v_1,j_1),\ldots,(v_r,j_r)\} \in \binom{V(G(\mathbf{t}))}{r} : \{v_1,\ldots,v_r\} \in E(G)\bigg\}.\]
\end{definition}

\begin{theorem}

If $\cP$ is hereditary and multiplicative, then $\cP$ is clonal.

\end{theorem}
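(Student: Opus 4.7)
The idea is to realize $G_{u\to v}$ (the nontrivial case being $u \neq v$) as an induced subgraph of an appropriate blow-up of $G$, using multiplicativity to produce a ``spare twin'' of $v$ and then hereditariness to discard $u$ and let the twin replace it.

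Specifically, I would take $\mathbf{t} \in \Z_+^{V(G)}$ with $t_v = 2$ and $t_w = 1$ for every $w \neq v$. By multiplicativity, the blow-up $G' := G(\mathbf{t})$ lies in $\cP$; in $G'$ the two copies $(v,1)$ and $(v,2)$ of $v$ are ``combinatorial twins'' with identical neighbourhoods. I would then remove the unique copy $(u,1)$ of $u$ to obtain $S := V(G') \setminus \{(u,1)\}$, and hereditariness gives $G'[S] \in \cP$.

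It remains to identify $G'[S]$ with $G_{u\to v}$. The natural bijection is $\phi : V(G) \to S$ defined by $\phi(w) = (w,1)$ for $w \neq u$ and $\phi(u) = (v,2)$, so that the clone $(v,2)$ of $v$ plays the role of $u$. A short case analysis on an edge $e \in E(G)$ should confirm that $\phi$ is a graph isomorphism: if $u \notin e$, then $e$ lifts via copy $1$ of each vertex to an edge $\phi(e)$ of $G'[S]$, matching the edge $e$ of $G_{u \to v}$; if further $v \in e$, there is a second lift using $(v,2)$ in place of $(v,1)$, matching the newly-added edge $e - v + u$ of $G_{u \to v}$; and if $u \in e$, every lift of $e$ in the blow-up contains the excised vertex $(u,1)$, matching the fact that all edges through $u$ are deleted in $G_{u\to v}$.

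The argument is essentially mechanical, and the only real obstacle is the bookkeeping to verify the edge correspondence in both directions. The degenerate case $u = v$, under which $G_{u\to u}$ is $G$ with all edges at $u$ deleted, is not directly captured by this construction; but it is also not needed for the applications of the theorem appearing later in the paper.
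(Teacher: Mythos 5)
Your proof is correct and uses exactly the same mechanism as the paper's: blow up one vertex via multiplicativity, delete the other via hereditariness, and identify the resulting graph with the cloned graph by an explicit isomorphism (you build $G_{u\to v}$ by doubling $v$ and deleting $u$, while the paper builds $G_{v\to u}$ by doubling $u$ and deleting $v$; these are the same argument up to renaming). Your remark about the degenerate case $u=v$ is also consistent with the paper, which likewise treats $u$ and $v$ as distinct.
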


\begin{proof} Let $G \in \cP$, and arbitrarily choose any two vertices $u,v \in V(G)$. We show that $G_{v \to u} \in \cP$. Without loss of generality, let us assume an ordering for the vertices of $G$: $(u, v, v_3, v_4, \ldots, v_n)$. Let $\bt = (2,1,1,\cdots,1) \in \Z_+^n$ and $H = G(\bt)$.  Let $V(H) = \{u_1, u_2, v, v_3, v_4, \ldots, v_n\}$, where $v, v_3, v_4, \ldots, v_n$ in $H$ are obtained from the vertices with the same names in $G$, and $u_1, u_2$ are obtained by doubling the vertex $u$ of $G$.

Clearly $H \in \cP$, since $\cP$ is multiplicative.
On the other hand, $H - v \in \cP$, since $\cP$ is hereditary, where $H - v := H[V(H) - v]$.

Now consider the map $V(H - v) \to V(G_{v \to u})$ where $v_i \mapsto v_i$ for $3 \le i \le n$, $u_1 \mapsto u$, and $u_2 \mapsto v$. This extends to an isomorphism from $H - v$ to $G_{v \to u}$, and hence $G_{v \to u} \in \cP$ as desired.
\end{proof}

Next, for the three properties presented above, we give a collection of counterexamples demonstrating that we have essentially no other implications between them. For this, we shall use a few notions of closure for families of hypergraphs.  In each case, given a property $P$ that a family of graphs may possess, we implicitly define the closure of the family $\cF$ with respect to $P$ as the intersection of all families $\cF' \supseteq \cF$ satisfying $P$.  

\begin{theorem}
For each of the following, there exist examples for which the following implications do not hold.
\begin{enumerate}
    \item $\cP$ clonal $\nRightarrow \cP$ is hereditary or multiplicative.
    \item $\cP$ is hereditary $\nRightarrow \cP$ is multiplicative.
    \item $\cP$ is multiplicative $\nRightarrow \cP$ is hereditary.
    \item $\cP$ is clonal and hereditary $\nRightarrow \cP$ is multiplicative.
    \item $\cP$ is clonal and hereditary and infinite $\nRightarrow \cP$ is multiplicative.
    \item $\cP$ is clonal and multiplicative $\nRightarrow \cP$ is hereditary.
    \item $\cP$ is multiplicative $\nRightarrow \cP$ is clonal.
    \item $\cP$ is hereditary $\nRightarrow \cP$ is clonal.
\end{enumerate}
\end{theorem}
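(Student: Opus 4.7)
The plan is to produce, for each of the eight non-implications, an explicit counterexample family $\cP$ of $2$-graphs (ordinary graphs) and verify the requisite properties. Most items admit rather simple counterexamples: forbidden-subgraph families $\fF(\cH)$ are automatically hereditary, singleton or size-restricted families are automatically clonal or hereditary by lack of room to fail, and blow-up-closed families are automatically multiplicative. Only item~(5) requires a genuinely tailored construction, as explained below.

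Concretely, I would use the following examples. For~(1), take $\cP = \{\overline{K_2}\}$: this is trivially fixed by every cloning operation, while $K_1 \notin \cP$ and $\overline{K_3} \notin \cP$ break heredity and multiplicativity respectively. For~(2), take $\cP = \fF(C_4)$: hereditarity is automatic, and the blow-up $K_3(2,1,1)$ introduces a $C_4$ via the two copies of the tripled vertex. For~(3), take $\cP = \{K_{a,b} : a, b \geq 1\}$: blow-ups of complete bipartite graphs with nonempty parts are again of that form, but inducing on one part of $K_{2,2}$ gives $\overline{K_2} \notin \cP$. For~(4), take $\cP = \{K_1\}$, which is trivially clonal and hereditary while $\overline{K_k} \notin \cP$ for $k \geq 2$. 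For~(6), take $\cP = \{G : |V(G)| \geq 2\}$: cloning preserves and blow-up increases vertex count, yet $K_1 = K_2[\{u\}] \notin \cP$. For~(7), take $\cP = \{C_5(\mathbf{t}) : \mathbf{t} \in \Z_+^5\}$, the family of all blow-ups of the pentagon: a blow-up of a blow-up is a blow-up, while cloning an adjacent pair of $C_5$ yields a tree, which is bipartite and hence not a blow-up of $C_5$ (whose chromatic number is $3$). For~(8), take $\cP = \fF(P_3)$, the cluster graphs: if $G$ is an edge $uv$ together with an isolated vertex $w$, then $G_{w \to u}$ introduces the edge $wv$ and yields a $P_3 \notin \cP$.

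Item~(5) is the main obstacle, because the most natural clonal and hereditary families---those of the form $\fF(\cH)$ with $\cH$ a $2$-covering collection---are automatically multiplicative. Indeed, if the blow-up $G(\mathbf{t})$ contained a copy of some $H \in \cH$, then the composition of the embedding with the vertex projection $V(G(\mathbf{t})) \to V(G)$ would be forced to be injective on $V(H)$ (since any two vertices of $H$ share an edge, and a collapsed pair would destroy that edge in $G$), yielding $H \subseteq G$ and contradicting $G \in \fF(\cH)$. My proposed family is therefore ad hoc: $\cP = \{\overline{K_n} : n \geq 1\} \cup \{K_2\}$. The infinite edgeless part is trivially closed under induced subgraphs, cloning, and even blow-ups; adjoining $K_2$ preserves both heredity (its induced subgraphs are $\overline{K_1}$ or $K_2$ itself) and clonality (cloning either vertex of $K_2$ returns $\overline{K_2}$), but it breaks multiplicativity since $K_2(2,1) \cong P_3$ is neither edgeless nor isomorphic to $K_2$.
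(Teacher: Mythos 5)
Your eight examples all verify, so the proposal is correct; it is the same counterexample-by-counterexample strategy as the paper, but with genuinely different (and generally smaller) constructions in most items. The most interesting divergence is item~(5): the paper takes the union of the family of $3$-graphs with chromatic number at most $10$ and the clonal-and-hereditary closure of $K_{100}$, and then has to argue that $K_{100}$ is a maximal element of the union while a multiplicative family can have no maximal element; your family $\{\overline{K_n} : n \geq 1\}\cup\{K_2\}$ achieves the same thing by a direct check (cloning either vertex of $K_2$ yields $\overline{K_2}$, every induced subgraph stays in the family, and $K_2(2,1)\cong P_3$ escapes it), which is simpler and avoids the maximality argument entirely. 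For~(2) you use the paper's family $\fF(C_4)$ with the witness $K_3$ versus $K_3(2,1,1)$ rather than $K_2$ versus $K_2(2,2)=C_4$ (and ``tripled'' should read ``doubled''); your~(7) runs on the same chromatic-number-drop mechanism as the paper's multiplicative closure of $K_{100}$, just with $C_5$; and your (1), (3), (4), (6), (8) replace the paper's parity trick, blow-up closure, finite closure, $3$-colorable graphs on at least $100$ vertices, and all complete graphs by more explicit small families, all correctly. Two cosmetic points: $P_3$-subgraph-free graphs are matchings plus isolated vertices rather than ``cluster graphs'' (that term usually means induced-$P_3$-free), though your specific cloning computation is what matters and is correct; and, exactly as in the paper's own examples, heredity is being read with respect to nonempty vertex subsets, else one would formally have to adjoin the empty graph in (4), (5), (8).
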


\begin{proof} \phantom{}
\begin{enumerate}
    \item Consider any infinite clonal family $\cP$, and remove all graphs with an even number of vertices to obtain a subfamily $\cP'$.  Since cloning does not change the same number of vertices, $\cP'$ is still clonal.  However, this family is neither hereditary nor multiplicative, as those properties each imply the presence of at least one graph with an even number of vertices.
    \item We present an example given in \cite{nikiforovanalyticpreprint}.  Consider the family $\cP = \fF(C_4)$ of all $2$-graphs with no induced $C_4$.  This is clearly a hereditary family, but because $C_4 = K_2(2,2) \not \in \cP$ while $K_2 \in \cP$, it is not multiplicative.
    \item Consider any graph $G$ on more than $1$ vertex, and define $\cP$ to be every possible blow-up of $G$.  Then $\cP$ is clearly multiplicative, but because $K_1 \not \in \cP$, it is not hereditary.
    \item Consider the clonal and hereditary closure of any finite graph. Because cloning vertices and taking induced subgraphs never increases the number of vertices, the result is always a finite, clonal, and hereditary family.  However, multiplicative families cannot be finite. 
    \item Take $\cP_1$ to be the family of $3$-graphs with chromatic number bounded by $10$, and let $\cP_2$ be the clonal and hereditary closure of $K_{100}$. Then $\cP_1$ is clonal and hereditary -- neither operation can increase the chromatic number -- and hence so is $\cP =\cP_1 \cup \cP_2$. However, note that $K_{100}$ is maximal in $\cP$, since if $G \supseteq K_{100}$ and $G \in \cP$, then since $\chi(G)>10$, it follows that $G \in \cP_2$ which implies $G \subseteq K_{100}$, and hence $G = K_{100}$. However, a multiplicative family cannot have a maximal element. 

    \item Consider any clonal and multiplicative family $\cP$ -- all $3$-colorable graphs, for example.  Now, let $\cP'$ contain all elements of $\cP$ with at least $100$ vertices. Then $\cP'$ is still clonal and multiplicative, but it is clearly not hereditary.
    \item Consider as $\cP$ the multiplicative closure of $K_{100}$.  All elements of $\cP$ have chromatic number $100$, but cloning any single vertex of $K_{100}$ onto any other vertex results in a graph with chromatic number $99$.  Therefore, $\cP$ is not clonal.
    \item Let $\cP$ be the collection of all finite complete graphs. This is hereditary, since an induced subgraph of a complete graph is complete. On the other hand, any cloning operation on a complete graph results in a non-complete graph.
\end{enumerate}
\end{proof}


\section{Upper bound on the principal ratio} \label{sec: principalupper}
Theorem~\ref{thm: principalupper} will follow from the following Proposition~\ref{prop: principalupper}, and so we prove that first.

\begin{proof}[Proof of Proposition~\ref{prop: principalupper}]
Since $\gamma(G,\bx) \geqs 1$ trivially, it suffices to show that $\gamma(G,\bx) \leqs 1 + O_{p,r}(n^{-1})$. To do this, we will first prove the cheaper bound $\gamma \ll_{p,r} 1$, and then bootstrap the result.

Let $z,u \in V(G)$ be vertices so that
\begin{equation} \label{eqn: gammadef} \gamma(G,\bx) = \frac{x_z}{x_u}. \end{equation}
In other words, $x_z = x_{\max}$ and $x_u = x_{\min}$. 
Applying Lemma~\ref{lem: eigen} to $z$, we see that
\[ \rho_p(G) x_z^{p-1} = \frac{1}{r} \frac{\partial}{\partial x_z} L_G(\bx) = (r-1)!\sum_{\substack{e \in E(G)\\z \in e}} x^{e - z}. \]
Separating the terms with $u \in e$, 

\[ \frac{\rho_p(G)}{(r-1)!} x_z^{p-1} = x_u \sum_{\substack{e \in E(G)\\\{u,z\} \subseteq e}}x^{e - \{u,z\}} + \sum_{\substack{e \in E(G)\\z \in e, u \notin e}} x^{e - z}. \]
Now, setting
\begin{equation} \label{eqn: beta} \beta := \sum_{\substack{e \in E(G)\\\{u,z\} \subseteq e}}x^{e - \{u,z\}} \leqs \frac{n^{r-2}}{(r-2)!}x_z^{r-2} , \end{equation}
we can rewrite the above as
\begin{equation} \label{eqn: zeigeneqn} \sum_{\substack{e \in E(G)\\z \in e, u \notin e}} x^{e - z} = \frac{\rho_p(G)}{(r-1)!} x_z^{p-1} - \beta x_u. \end{equation}
On the other hand, applying Lemma~\ref{lem: eigen} to $u$, 
\begin{equation} \label{eqn: ueigeneqn} \sum_{\substack{e \in E(G)\\ u \in e}} x^{e - u} = \frac{\rho_p(G)}{(r-1)!} x_u^{p-1}.
\end{equation}
Then, let $G' = G_{u \to z}$. By an easy calculation,
\[ L_{G'}(\bx) = L_G(\bx) - r! x_u  \sum_{\substack{e \in E(G)\\u \in e}} x^{e - u} + r! x_u \sum_{\substack{e\in E(G)\\z\in e,u\notin e}} x^{e-z}. \]
Now, $L_{G'}(\bx) \leqs \rho_p(G') \leqs \Lambda_p(\cP,n)$. Furthermore, $L_G(\bx) = \Lambda_p(\cP,n)$ by assumption. Combining these with the above, we see that
\[ \Lambda_p(\cP,n) \geqs \Lambda_p(\cP,n) -  r! x_u \sum_{\substack{e \in E(G)\\u \in e}} x^{e - u} + r! x_u \sum_{\substack{e\in E(G)\\z\in e,u\notin e}} x^{e-z}. \]
Now, substituting \eqref{eqn: zeigeneqn} and \eqref{eqn: ueigeneqn} above, we get
\[ 0 \geqs -r! x_u\bigg(\frac{\rho}{(r-1)!} x_u^{p-1}\bigg) + r! x_u \bigg( \frac{\rho}{(r-1)!} x_z^{p-1} - \beta x_u\bigg), \]
where $\rho = \rho_p(G)$. Dividing throughout by $r\rho x_u^p$, letting $\beta' := (r-1)! \beta x_u^{2-p} \rho^{-1}$, and recalling that $\gamma = \gamma(G,\bx) = x_z/x_u$, we find that
\[ 0 \geqs -1 + \gamma^{p-1} - \beta', \]
and hence,
\begin{equation} \label{eqn: g ll b'} \gamma \leqs (1 + \beta')^{1/(p-1)}. \end{equation}

Since $\bx \in \ppS$, we can drop all terms from the left of \eqref{eqn: ueigeneqn} which do not involve $z$ to get that
\[ \beta x_z = \sum_{\substack{e \in E(G)\\ \{u,z\} \in e}} x^{e - u} \leqs \frac{\rho_p(G)}{(r-1)!} x_u^{p-1}.\]
Ignoring constants that depend on $r$ and dividing throughout by $\rho x_z x_u^{p-2}$, this says that
\[ \beta' \ll_{p,r} \gamma^{-1}. \]
Now, if $\beta' \leqs 1$, then by \eqref{eqn: g ll b'}, $\gamma$ is bounded. If not, \eqref{eqn: g ll b'} combined with the above gives us that
\[ \gamma \ll_{p,r} (\beta')^{1/(p-1)} \ll_{p,r} \gamma^{-1/(p-1)}, \]
and hence $\gamma \ll_{p,r} 1$ as desired. In particular, this tells us that $x_u \leqs x_z \ll_{p,r} x_u$. Since $\pnorm{\bx} = 1$, this implies that
\begin{equation} \label{eqn: cheap-flatness} \frac{1}{n^{1/p}} \ll_{p,r} x_u \leqs x_z \ll_{p,r} \frac{1}{n^{1/p}}. \end{equation}

Returning now to \eqref{eqn: g ll b'}, we see that the proposition follows if $\beta' \ll_{p,r} n^{-1}$. Since $\cP$ is dense, Lemma~\ref{lem: density} tells us that $\rho \gg_{p,r} n^{r(1 - 1/p)}$. Thus, by \eqref{eqn: beta} and \eqref{eqn: cheap-flatness},
\begin{equation*} \begin{split} \beta' 
&= \frac{(r-1)! \beta}{\rho x_u^{p-2}}  \\
& \leqs \frac{(r-1)n^{r-2} x_z^{r-2}}{\rho x_u^{p-2}} \\
& \ll_{p,r} \frac{n^{r-2} (n^{-1/p})^{r-2}}{n^{r(1-1/p)} (n^{-1/p})^{p-2}} = n^{-1}, \end{split} \end{equation*}
completing the proof.
\end{proof}

To deduce Theorem~\ref{thm: principalupper} from the above, it suffices to note that, as per Remark~\ref{rem: erdosdensity} and Proposition~\ref{prop: 2-cover give clonal}, if $\cH$ satisfies the hypotheses of the theorem, then $\fF(\cH)$ is a dense clonal family of $r$-graphs, and hence one can apply the above proposition with $\cF = \fF(\cH)$. 

\section{Proof of Theorem~\ref{thm: CTforpnorm}} \label{sec: CTforpnorm}
We suppress the notational dependence on $G$ and $\bx$, as they are fixed. Note that $\gamma = \infty$ if $\bx \in \ppS \setminus \pppS$, so there is nothing to show. Thus, we can assume that $\bx \in \pppS$. Letting $z,u \in V(G)$ be as in the proof of Proposition~\ref{prop: principalupper}, viz. $x_z = x_{\max}$ and $x_u = x_{\min}$, we see that $x_u > 0$ and that \eqref{eqn: gammadef} holds.

Applying Lemma~\ref{lem: eigen} to the vertex $v$ satisfying $\deg v = \Delta$, we find that
\[ \rho_p x_{z}^{p-1} \geqs \rho_p x_v^{p-1} = (r-1)! \sum_{\substack{e \in E(G)\\ v \in e}} x^{e-v} \geqs  (r-1)! \Delta x_{u}^{r-1}, \]
while applying it to the vertex $w$ satisfying $\deg w = \delta$, we get
\[ \rho_p x_{u}^{p-1} \leqs \rho_p x_w^{p-1} = (r-1)! \sum_{\substack{e \in E(G)\\ w \in e}} x^{e-w} \leqs  (r-1)!\delta x_{z}^{r-1}. \]
Multiplying the end-points of the previous two inequalities, we obtain
\[  (r-1)! \rho_p\delta x_{z}^{p+r-2} \geqs  (r-1)!\rho_p\Delta x_{u}^{p+r-2},\]
whence the theorem follows by recalling \eqref{eqn: gammadef} and some rearrangement. 

\section{Tight families and progress towards Conjecture~\ref{conj: bridgelessifftight}} \label{sec: tightfamilies}
We begin our presentation of the evidence for Conjecture~\ref{conj: bridgelessifftight} by proving one direction, viz. Theorem~\ref{thm: bridgelessimpliestight}. 

\begin{proof}[Proof of Theorem~\ref{thm: bridgelessimpliestight}] For the sake of contradiction, suppose that $H$ is a $k$-bridgeless $r$-graph for which $\fF(H)$ is not $k$-tight. It follows that there exists a graph $G$ which is edge-maximal with respect to being $H$-free but is not $k$-tight. Thus, for any $e \notin E(G)$, one has that $G+e$ has a copy of $H$. Since $G$ is not $k$-tight, there is a set $U \subsetneq V(G)$ which induces an edge and so that no $e \in E(G)$ satisfies 
\begin{equation} \label{eqn: ecapu}  k \leqs | e \cap U | \leqs r -1. \end{equation}
Since $U$ induces an edge, $|U| \geqs r$. Let $C \subseteq U$ be an arbitrary subset satisfying $|C| = r-1$. Further, since $U$ is proper, there is an element $b \in V(G) \setminus U$. Define $f_1 = C \cup \{b\}$. Clearly, since $|f \cap U| = r-1 \geqs k$, it follows that $f_1 \notin E(G)$. Since $G$ is $H$-free but $G+f_1$ is not, it follows that $G+f_1$ contains a copy $H'$ of $H$ such that $f_1 \in E(H')$. 

By construction, if we set $A = V(H') \cap U$ and $B = V(H') \setminus U$, then $|f_1 \cap A| \geqs r-1 \geqs k$ and $f_1 \cap B \neq \emptyset$. 
However, since $H'$ is $k$-bridgeless, there must then be another edge $f_2 \in E(H')$ with the property that 
\[ r-1 \geqs |f_2 \cap U| \geqs |f_2 \cap A| \geqs k. \]
But then, since $f_2 \neq f_1$ and $f_2 \in E(G+f_1)$, $f_2$ must actually be an edge in $G$, contradicting the fact that $G$ has no edges satisfying \eqref{eqn: ecapu}. This completes the proof of Theorem~\ref{thm: bridgelessimpliestight}.
\end{proof}

In the converse direction of Conjecture~\ref{conj: bridgelessifftight}, we prove only a partial result. To state this result, we need a couple of definitions.

\begin{definition}
Let $H$ be a connected $r$-graph and $\lambda \vdash r$ be a nontrivial partition. An edge $e$ in $H$ is called a \emph{$\lambda$-plateau} if 

\[ H - e = \bigsqcup_{j=1}^\ell H_j \]
with the property that $|e \cap V(H_j)| = \lambda_j$ for $1 \leqs j \leqs \ell$. In particular, this implies that $H - e$ is disconnected.
\end{definition}

\begin{remark} Note that we are not mandating that the induced subgraph on $V_j$ is connected. Thus, one could have that $e$ is a $\mu$-plateau also, where $\mu \vdash r$ is a partition that refines $\lambda$.
\end{remark}

\begin{remark}
Observe that a $1$-bridge is a $\lambda$-plateau for some $\lambda \vdash r$ with $\lambda_\ell = 1$ (see also Remark~\ref{rem: 1bridge}).
\end{remark}
\begin{definition} \label{def: plateaud}
Let $1 \leqs k \leqs r-1$ and $H$ be an $r$-graph. Then, $H$ is called \emph{$k$-plateaued} if for every $\lambda \vdash r$ with $k \leqs \lambda_1 \leqs r-1$, we have that $H$ contains a $\lambda$-plateau. 
\end{definition}
\begin{remark} It is easy to create examples of $k$-plateaued graphs for every $1 \leqs k \leqs r-1$ by a greedy construction.  
\end{remark}

The result is then:
\begin{theorem} \label{thm2: plateau}
Let $1 \leqs k \leqs r-1$, and let $H$ be a $k$-plateaued $r$-graph on at least $r+1$ vertices. Then, $\fF(H)$ is not $k$-tight.
\end{theorem}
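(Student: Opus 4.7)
The plan is to explicitly construct an edge-maximal $H$-free graph $G$ that fails to be $k$-tight, with failure witnessed by a specific set $U \subsetneq V(G)$. As a preliminary observation, $k$-plateaued forces $H$ to be connected: in any $\lambda$-plateau $e$, every component of $H - e$ must meet $e$ because each $\lambda_j \geqs 1$, so any component of $H$ disjoint from $e$ would contradict the $\lambda$-plateau definition. This means I can freely use connectedness of $H$ in what follows.

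For the construction, I would take $V_1$ and $V_2$ to be disjoint sets with $|V_1| = |V_2| = |V(H)|-1 \geqs r$, put $V(G) = V_1 \sqcup V_2$, and start from the base graph $G_0 = K^{(r)}_{V_1} \sqcup K^{(r)}_{V_2}$ (the disjoint union of the two complete $r$-uniform graphs). Since $H$ is connected yet $|V_i| < |V(H)|$, $G_0$ is $H$-free. I then let $G$ be any edge-maximal $H$-free graph containing $G_0$ and set $U := V_1$; because the within-side edges are already all in $G_0$, the extra edges of $G$ are necessarily crossing edges.

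The heart of the argument is to show that no edge of $G$ satisfies $k \leqs |e \cap V_1| \leqs r-1$; equivalently, for each $j \in [k, r-1]$ and each $e$ with $|e \cap V_1| = j$, the graph $G_0 + e$ already contains a copy of $H$. To prove this, I apply the $k$-plateaued hypothesis to the partition $\lambda = (j, 1^{r-j})$ (valid because $\lambda_1 = j \in [k, r-1]$) and obtain a $\lambda$-plateau $\tilde{e}$, so that $H - \tilde{e}$ splits as $H_1 \sqcup H_2 \sqcup \cdots \sqcup H_{r-j+1}$ with $|\tilde{e} \cap V(H_1)| = j$ and $|\tilde{e} \cap V(H_i)| = 1$ for each $i \geqs 2$. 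I then embed $H$ into $G_0 + e$ by sending $\tilde{e} \cap V(H_1)$ bijectively onto $e \cap V_1$, sending the singleton $\tilde{e} \cap V(H_i)$ for each $i \geqs 2$ bijectively onto $e \cap V_2$, and extending to send the remaining vertices of $V(H_1)$ injectively into $V_1 \setminus e$ and the remaining vertices of each $V(H_i)$ (for $i \geqs 2$) injectively into $V_2 \setminus e$. The bounds $|V_1|, |V_2| = |V(H)| - 1$ make this feasible since $|V(H_1)| \leqs |V(H)| - (r-j) \leqs |V(H)|-1$ and $\sum_{i\geqs 2}|V(H_i)| \leqs |V(H)| - 1$. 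Every edge of $H$ other than $\tilde{e}$ lies entirely inside a single $V(H_i)$, hence maps to an $r$-subset of $V_1$ or $V_2$, which is automatically an edge of $G_0$ since both $G_0[V_1]$ and $G_0[V_2]$ are complete; and $\tilde{e}$ itself maps to $e$, completing the embedding.

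With the key step in hand, the conclusion is immediate: $G$ is edge-maximal $H$-free by construction, $V_1$ is a proper subset of $V(G)$ (since $V_2 \neq \emptyset$) which induces every $r$-subset of itself as an edge of $G$, and the key step ensures no edge of $G$ has $k \leqs |e \cap V_1| \leqs r-1$, so $V_1$ witnesses that $G$ is not $k$-tight. The main technical obstacle I expect is the delicate dual constraint on $|V_1|$: it must be at least $|V(H)|-1$ to accommodate the big component $V(H_1)$ appearing in the worst case $j = r-1$, yet at most $|V(H)|-1$ for $G_0$ to remain $H$-free. Fortunately the two constraints meet exactly at $|V_1| = |V(H)|-1$, which is precisely what allows the argument to go through uniformly across all $j \in [k, r-1]$.
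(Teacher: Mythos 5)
Your proof is correct and follows essentially the same strategy as the paper's: start from a disjoint union of cliques on $|V(H)|-1$ vertices, pass to an edge-maximal $H$-free supergraph, and show that the vertex set of one clique witnesses failure of $k$-tightness by using the $k$-plateaued hypothesis to embed a copy of $H$ whenever a crossing edge with $k \leqs |e\cap V_1| \leqs r-1$ is added. One small quibble: your preliminary observation that ``$k$-plateaued forces $H$ to be connected'' is justified incorrectly -- a component of $H-e$ need not meet $e$, since the $H_j$'s in the definition of a $\lambda$-plateau are allowed to be disconnected and may absorb such components -- but this is harmless, since the definition of a $\lambda$-plateau in the paper already stipulates $H$ connected.
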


While it follows immediately from Theorem~\ref{thm2: plateau}, it is instructive to prove Corollary~\ref{corr: 1bridgelessiff1tight} first, as a key idea will be reused. 

\begin{proof}[Proof of Corollary~\ref{corr: 1bridgelessiff1tight}]
If $H$ is a $1$-bridged $2$-graph, then $H - e = H_1 \sqcup H_2$ where $e$ is an edge with one vertex $u_1$ in $H_1$ and one vertex $u_2$ in $H_2$. Let $t = |V(H)| - 1$, and define $G = \ell K_t$ to be $\ell$ disjoint copies of the complete graph on $t$ vertices. 

We claim that $G \in \fF(H)$ is edge-maximal. To see that $G \in \fF(H)$, recall that $H$ is connected. Thus, if $G$ contains a copy of $H$, then it must be contained in a component of $G$. On the other hand, $t < |V(H)|$ whence the components of $G$ do not have enough space for $H$, giving a contradiction. To see the maximality of $G$, note that $G+f$ for any $f \notin V(G)$ must satisfy that both vertices of $f$ are in different cliques. However, this means that a copy of $H$ exists in $G+f$ with $f$ as the $1$-bridge, since $|V(H_j)| \leqs |V(H)| - 1 = t$, and copies of $H_j$ can be found in the cliques with $f$ corresponding to the vertices labelled $\{u_1,u_2\}$.
\end{proof}

\begin{figure}[h]
\includegraphics[width=5in]{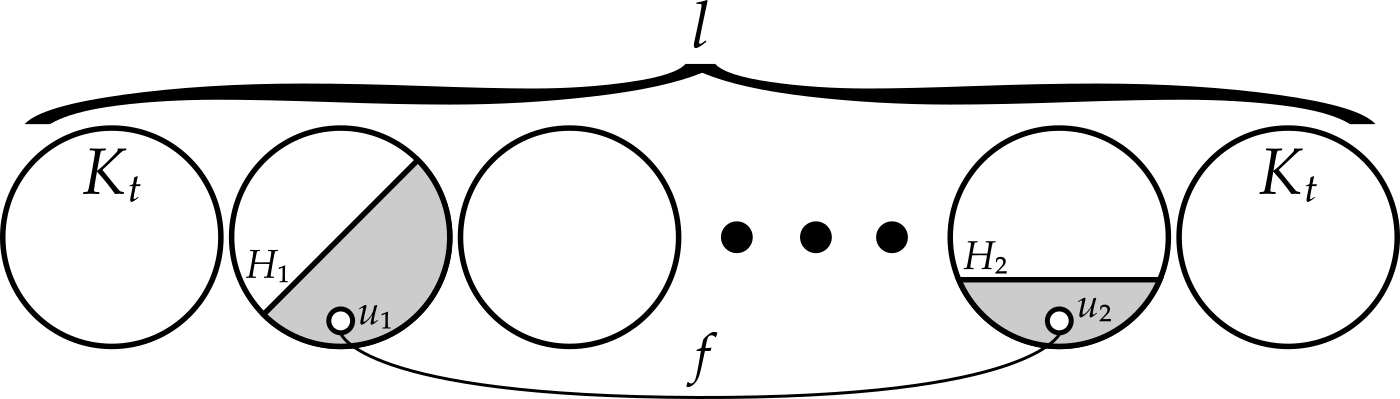}
\centering
\caption{Illustration of the argument for Corollary \ref{corr: 1bridgelessiff1tight}.}
\end{figure}

Embedded in the above proof is an easy lemma.

\begin{lemma}
Let $H$ be an $r$-graph with a $\lambda$-plateau from some nontrivial $\lambda \vdash r$ with $\ell$ parts. Further, let $L_{\ell,\lambda}$ be the $r$-graph consisting of $\ell$ disjoint copies of $\tetra{t}{r}$, together with an edge $f$ such that $f$ intersects the $j$th clique on exactly $\lambda_j$ vertices. Then, $L_{\ell,\lambda}$ contains a copy of $H$.
\end{lemma}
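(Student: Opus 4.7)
The plan is to build an explicit edge-preserving injection $\phi: V(H) \hookrightarrow V(L_{\ell,\lambda})$ directly from the structure of the $\lambda$-plateau. Let $e$ denote the $\lambda$-plateau in $H$, so that $H - e = \bigsqcup_{j=1}^\ell H_j$ with $e_j := e \cap V(H_j)$ of size $\lambda_j$. On the target side, let $K_1, \ldots, K_\ell$ denote the $\ell$ disjoint copies of $\tetra{t}{r}$ appearing in $L_{\ell,\lambda}$, and set $f_j := f \cap V(K_j)$, so that $|f_j| = \lambda_j$ by the definition of $L_{\ell,\lambda}$.

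I would then define $\phi$ piecewise on each $V(H_j)$ as follows: choose any bijection $e_j \to f_j$ (the sets have equal size $\lambda_j$), and then inject the remaining $|V(H_j)| - \lambda_j$ vertices of $V(H_j)$ into $V(K_j) \setminus f_j$. The only thing to check here is that $V(K_j) \setminus f_j$ is large enough, equivalently $t \geqs |V(H_j)|$; this is the implicit assumption on $t$ matching the $t = |V(H)| - 1$ choice from the proof of Corollary~\ref{corr: 1bridgelessiff1tight}, which is admissible since nontriviality of $\lambda$ forces $\ell \geqs 2$ and hence $|V(H_j)| \leqs |V(H)| - 1$. Because the $V(K_j)$'s are pairwise disjoint, gluing these local maps yields a well-defined injection $\phi: V(H) \to V(L_{\ell,\lambda})$.

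To verify that $\phi$ is edge-preserving, I would split the edges of $H$ into two kinds. The edge $e$ maps to $f$ by construction, and $f \in E(L_{\ell,\lambda})$. Every other edge lies in some $H_j$ (since $H - e = \bigsqcup_j H_j$), so it is an $r$-subset of $V(H_j)$, whose image under $\phi$ is an $r$-subset of $V(K_j)$ and therefore an edge of $\tetra{t}{r} \subseteq L_{\ell,\lambda}$ because that clique is complete. This exhausts $E(H)$, giving the desired copy of $H$ in $L_{\ell,\lambda}$. The argument is essentially bookkeeping; the only real ingredient beyond the structural decomposition of $H$ coming from the plateau is the size estimate on $t$, and that is taken care of by the implicit choice $t \geqs |V(H)| - 1$.
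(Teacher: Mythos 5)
Your proposal is correct and is essentially the paper's own argument: the paper proves the lemma only implicitly (as the embedding step inside the proof of Corollary~\ref{corr: 1bridgelessiff1tight} and Theorem~\ref{thm2: plateau}), namely mapping each component $H_j$ of $H-e$ into the $j$th clique so that $e\cap V(H_j)$ lands on $f\cap V(K_j)$, exactly as you do. Your explicit check that $t\geqs |V(H_j)|$ suffices (using $\ell\geqs 2$ from nontriviality of $\lambda$, so $|V(H_j)|\leqs |V(H)|-1$) is a welcome clarification of a condition the paper leaves implicit in its choice of clique size.
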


\begin{proof}[Proof of Theorem~\ref{thm2: plateau}]

Let $t = |V(H)|$ be the number of vertices of $H$. Define $G'$ to be a collection of $\ell$ disjoint $K_{t-1}$. Trivially, $G'$ is $H$-free as any copy of $H$ must be contained in some component of $G'$, but no component of $G'$ has enough vertices. Take $G$ to be an $H$-saturated graph on $V(G')$ containing $G'$ as a subgraph.  
  
We claim that $G$ will not be $k$-tight. To witness this, let $J_1, \cdots, J_\ell$ be the vertex sets of the cliques contained in $G'$, and set $A = J_1$ and $B = G \setminus J_1$. It is trivial that $A$ induces an edge in $G$, since $t-1 \geq r$. Suppose there were an edge $e$ with $k \leqslant |e \cap A| \leqslant r-1$. Suppose that $e$ has $\mu_j$ vertices in $J_j$. Then some of the $\mu_j$ may be zero, and the ones which are non-zero would form a partition $\lambda$ (after some rearrangement of indices) of $r$. Furthermore, $\lambda_1 \geqs k$, since the part of $\lambda$ corresponding to $\mu_1$ is clearly $\geqs k$. However, this creates a copy of $L_{\lambda,n}$, and hence by the previous lemma, a copy of $H$. But since $G$ is edge-maximal in $\fF(H)$ this cannot be the case, and hence by contradiction, there is no such edge $e$. This completes the proof.  
\end{proof}



\bibliography{hypergraphspex}
\bibliographystyle{plain}

\end{document}